\documentclass[11pt]{amsart}
\usepackage{amsmath,amsfonts,amssymb,amsthm}
\usepackage{graphics,subfig}
\usepackage{epsfig}
\usepackage{esint}
\usepackage{footmisc}
\usepackage{tikz}

\usepackage{enumerate}
\usepackage{aligned-overset}

\newcommand{ \E }{ \mathbb{E} }

\newcommand{\ignore}[1]{}

\usepackage[hidelinks,hyperindex,breaklinks]{hyperref}

\newcommand{\len}{\text{Len}}

\newtheorem{proposition}{Proposition}
\newtheorem{theorem}{Theorem}
\newtheorem{remark}{Remark}
\newtheorem{lemma}{Lemma}
\newtheorem{corollary}{Corollary}

\title[Min-max optimal matching]{A geometrically linear approximation of min-max optimal matching}

\author{Xiaopeng Cheng, Felix Otto, Matteo Palmieri, Arthur Wachtel}

\begin{document}

\begin{abstract}
In this work we establish a connection between the min-max optimal matching in the critical dimension $d=2$ and a simpler, geometrically linear, action of random curves in a Brownian potential. This is done by following Leighton and Shor and working with the dual formulation, which we approximate by a problem of isoperimetric-type with a random volume term of white-noise character. This allows us to extract the leading-order term in the asymptotics of the expected cost, sharpening previous results.
\end{abstract}
	
\maketitle

\tableofcontents

\section{Introduction}

\subsection{Main result}
The goal of this paper is to draw a connection between two, apparently unrelated, random optimization problems. The first one is two-dimensional and comes from semi-discrete optimal matching: Given the cube $Q_L:=(0,L)^2\subset\mathbb{R}^2$, we consider the restriction $\lambda=dx\llcorner Q_L$ of the two-dimensional Lebesgue measure to $Q_L$, and given a natural number $N$ the atomic measure
\begin{align}\label{M33}
\mu:=\sum_{i=1}^N\delta_{X_i}\quad\mbox{where}\quad \{X_i\}_{i=1}^N~\mbox{are i.~i.~d.~points in $Q_L$ with law $\lambda$.}
\end{align}
With the constraint $N=L^2$, the two measures have the same mass, and we may study the optimal transport problem
\begin{align}\label{a14}
\mathrm{Wass}_\infty(\mu,\lambda):=\inf\{\text{sup}_{(x,y)\in\text{spt}\pi}|x-y|\big|\mbox{coupling $\pi$ between $\lambda$ and $\mu$}\}.
\end{align}

\medskip

On the other side there is a simpler, geometrically linearized, $(1+1)$-dimensional action: Given a two-dimensional white noise $\xi$, an integer $L\ge1$ and a function $h:[0,L]\to\mathbb{R}$, define
\begin{align}\label{M36}
W(h)-D(h):=\int_0^Ldx\int_0^{h(x)}dy~\xi-\int_0^Ldx\frac{1}{2}(\frac{dh}{dx})^2.
\end{align}
On the configuration space
\begin{align}\label{M34}
&\big\{\mbox{continuous functions $h$ $\big|$ $h$ is linear on $[n-1,n]$}\nonumber\\&\qquad\mbox{for $n=1,\ldots,L$ with $h(0)=h(L)=0$}\big\},
\end{align}
consider the variational problem
\begin{align}\label{M38}
A_L:=\max_{h}(W-D)(h)/L.
\end{align}

\medskip

Both problems are critical\footnote{See Section \ref{S:2} for the precise meaning.} and indeed come with two input scales: a macroscopic scale $L$, measuring the domain size; a microscopic scale, given by the typical point-to-point distance in \eqref{M33} and by the spacing between the interpolation points in \eqref{M34}, which is arbitrarily set to 1. A concrete manifestation of criticality is the appearance of logarithmic scalings\footnote{with the notation $A\sim B$ we mean that there exists a universal finite and positive constant $C$ such that $C^{-1}A\le B\le CA$.},
\begin{align}\label{M18}
\E \mathrm{Wass}_\infty(\mu,\lambda)\sim \ln^{3/4}L\quad\mbox{and}\quad\mathbb{E}A_L\sim\ln L,
\end{align}
where the argument of the logarithms should be interpreted as the non-dimensional ratio $L/1$ of the two scales involved. The first result has been known since Leighton-Shor \cite{LS} (see also Talagrand \cite[Chapter 4.7]{T}), while the second one has recently been proved in \cite{OPW}, in the more precise\footnote{The optimal convergence rate has later been found in \cite{OP}. We are actually working in the more natural setting of \cite{OP}, where the ultraviolet cutoff has been imposed only at the level of the configuration space \eqref{M34}, rather than on the field term $W$ in \eqref{M36}.} form:
\begin{align*}
\mbox{there exists}\quad\lim_{L\uparrow\infty}\frac{\mathbb{E}A_L}{\ln L}\in(0,\infty).
\end{align*}

\medskip

Our main result makes this connection rigorous on the level of the leading-order terms of the optimal values in the limit $L\gg1$.

\begin{theorem}\label{T:1}
\begin{align*}
\lim_{L\uparrow\infty}\frac{\mathbb{E}\mathrm{Wass}_\infty(\mu,\lambda)}{\ln^{3/4}L}=\big(\frac{4}{3}\lim_{L\uparrow\infty}\frac{\mathbb{E}A_L}{\ln L}\big)^{3/4}.
\end{align*}
\end{theorem}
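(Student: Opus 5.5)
Following Leighton--Shor, the plan is to pass to the dual formulation of $\mathrm{Wass}_\infty$. Up to boundary effects, $\mathrm{Wass}_\infty(\mu,\lambda)\le r$ holds if and only if for every (nice) set $A\subset Q_L$ one has $\mu(A)\le\lambda(A_r)$, where $A_r$ denotes the $r$-neighborhood; this is Hall's marriage condition in continuum form. For sets with rectifiable boundary, $\lambda(A_r)-\lambda(A)\approx r\,\mathrm{Per}(A)$. Hence $\mathrm{Wass}_\infty$ is, to leading order, the smallest $r$ with
\begin{align*}
\sup_A\big((\mu-\lambda)(A)-r\,\mathrm{Per}(A)\big)\le 0 .
\end{align*}
This is an isoperimetric-type problem whose volume term $(\mu-\lambda)(A)$ behaves like a white noise, i.e.\ a Poisson field with unit intensity.

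Next I would reduce to graphs. Following the Leighton--Shor/Talagrand chaining intuition, the critical sets are perturbations of a rectangle whose boundary is a long graph $y=h(x)$ over a segment of length $\ell$, with small slopes. For such sets the extra volume is $\int dx\int_0^{h}dy\,\xi=W(h)$, and the perimeter increment is
\begin{align*}
\int\big(\sqrt{1+h'^2}-1\big)\,dx\approx D(h),
\end{align*}
which is the geometric linearization. The constant perimeter $\ell$ and the linear part cancel, because the comparison rectangle balances itself up to lower order. The problem then becomes: find the smallest $r$ such that $\max_h\big(W(h)-rD(h)\big)\lesssim r\ell$. By Brownian scaling, $h\mapsto h/\sqrt r$ and a rescaling of the grid turn $\max_h(W-rD)$ over $[0,\ell]$ into $r^{-1/3}$-type multiples of $A$ at a modified effective length. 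Combining this with $\mathbb{E}A_L\approx c\ln L$, where $c:=\lim_{L\uparrow\infty}\mathbb{E}A_L/\ln L$, the balance $c\ln L\cdot(\cdot)\sim r^{4/3}$ yields $r\approx(\frac43 c\ln L)^{3/4}$. The factor $\frac43$ and the exponent $\frac34$ should come from the scaling exponent combined with integrating over the scales $1\ll\ell\ll L$, which produces the $\ln L$; I would fix the constant by an explicit scaling computation.

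The argument then splits into two bounds. For the upper bound on $\mathrm{Wass}_\infty$, I must show that every set $A$ satisfies Hall's condition with $r=(1+\epsilon)(\frac43c\ln L)^{3/4}$. This requires a multiscale decomposition of arbitrary boundaries into graph-like pieces at dyadic scales, controlling large-slope and non-graph parts by the perimeter, and bounding the Poisson-versus-white-noise discrepancy and the microscopic (discretization) error. It also requires concentration of $A_L$, so that a union bound over pieces and locations costs only lower-order terms. For the lower bound, I would take a near-optimal $h$ for $A_L$ and build the set $A=\{y<h(x)\}$ locally, with shifted copies placed across the strip, showing that Hall's condition fails for $r=(1-\epsilon)(\cdots)^{3/4}$ with high probability. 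Here concentration of $A_L$ around its mean is also used.

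The main obstacle is the upper bound: controlling the supremum over all sets, not just graphs, with only $1+o(1)$ loss. Leighton--Shor's chaining loses constants there. My first attempt would be to reduce to the isoperimetric problem restricted to $x$-monotone boundary pieces at each scale, using a multiscale induction where errors are summed over $\ln L$ scales, and to show that per-scale errors are $o(1)$ relative to the main contribution.
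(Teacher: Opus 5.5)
Your proposal has two genuine gaps, and the second one affects the constant itself.

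\textbf{Reduction to a sharp-interface problem.} As you state it, this step fails for the atomic measure. For any $r$, take $A=B_\epsilon(X_1)$ with $X_1$ interior. Then $(\mu-\lambda)(A)-r\,\mathrm{Per}(A)\ge 1-\pi\epsilon^2-2\pi r\epsilon>0$ for small $\epsilon$, so the ``smallest $r$'' is $+\infty$. The replacement $\lambda(A_r)-\lambda(A)\approx r\,\mathrm{Per}(A)$ is valid only for sets that are regular on scales $\gg r\sim\ln^{3/4}L$. You give no mechanism that restricts the dual problem to such sets. Nor do you give a coupling of $\mu-\lambda$ (a centered binomial process, not a white noise) to $\xi$ that is uniform over all sets with error $o(\ln^{3/4}L)\,P(\Sigma,Q_L)$; a pointwise CLT does not provide this. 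The paper supplies exactly these ingredients:
\begin{enumerate}
\item It coarse-grains $\mu$ at a scale $\ln^{1/2}L\ll l\ll\ln^{3/4}L$. This costs $\sqrt2\,l=o(\ln^{3/4}L)$ and gives $\|\rho_l-1\|_{L^\infty}\ll1$ with overwhelming probability (Lemma~\ref{L:3}).
\item Proposition~\ref{P:1} identifies $\mathrm{Wass}_\infty(\rho_l\lambda,\lambda)$ with the ratio problem up to a factor $1+O(\|\rho_l-1\|_{L^\infty})$. The upper bound uses the coarea formula. The lower bound uses that the maximizer is a $\kappa$-perimeter minimizer with $\kappa R_*=\|\rho_l-1\|_{L^\infty}\ll1$, so its boundary is $C^{1,1}$ with curvature $\le\kappa$, which bounds the area enlargement by $(1+\kappa R)R\,P(\Sigma_*,Q_L)$.
\item The Gaussian approximation is a KMT coupling made uniform over lattice animals via the stacking map $\tau$ and $P(\Sigma,Q_L)\ge l|\mathrm{Comp}(\Sigma)|$. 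It is extended to all sets by the same regularity, followed by the change of small-scale cutoff and Borell's inequality.
\end{enumerate}

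\textbf{The constant.} Your linearization does not produce the factor $\frac{4}{3}$. Your reduced problem, the smallest $r$ with $\max_h(W-rD)\le r\ell$, can be evaluated exactly. The vertical rescaling $h=r^{-2/3}\hat h$ keeps the unit horizontal cutoff, and $(W-rD)(r^{-2/3}\hat h)$ has, jointly in $\hat h$, the law of $r^{-1/3}(W-D)(\hat h)$. Hence $\mathbb{E}\max_h(W-rD)=r^{-1/3}\ell\,\mathbb{E}A_\ell\approx r^{-1/3}\ell\,c\ln\ell$, and the balance gives $r\approx(c\ln L)^{3/4}$, missing the factor $(\frac{4}{3})^{3/4}$.

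This one-shot linearization is also not self-consistent. By the envelope theorem, at that $r$ the maximizer has $\mathbb{E}D(h_*)=\frac{1}{3}r^{-4/3}\ell\,c\ln\ell=\frac{1}{3}\ell$. That is a mean-square slope of $\frac{2}{3}$, so $\sqrt{1+h'^2}-1\approx\frac{1}{2}h'^2$ is not justified globally. Each dyadic scale tilts the curve by $\sim r^{-2/3}\sim\ln^{-1/2}L$, and over $\sim\ln L$ scales these tilts accumulate to $O(1)$. The linearization can therefore only be done scale by scale, in the curve's own rotating frame. There the perimeter increments compound and the effective tension decreases with scale; heuristically, this is what turns $c$ into $\frac{4}{3}c$.

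Two consequences follow. The estimate $P\le\ell+D(h)$ in your lower-bound construction can certify no more than $(c\ln L)^{3/4}$. And an upper-bound scheme that linearizes with a fixed tension aims at the wrong value. This multiscale linearization is precisely Theorem~\ref{T:4} from \cite{COP}, which the paper uses as a black box. What the proof of Theorem~\ref{T:1} actually has to supply is the rigorous reduction of $\mathbb{E}\,\mathrm{Wass}_\infty(\mu,\lambda)$ to $\mathbb{E}\max_{\Sigma\in\mathcal{P}_{L,1}}W(\Sigma)/P(\Sigma,Q_L)$ with an $o(\ln^{3/4}L)$ error, as described in the previous paragraph.
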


\medskip

\subsection{Heuristic derivation}
Let us now explain how to reduce the optimal matching to the maximization of the above action $W-D$. The starting point is the dual formulation of the $\mathrm{Wass}_\infty$ distance, which follows from Strassen's theorem (see \cite[Chapter 10.3, Theorem 8]{Pol}),
\begin{align}\label{m3}
\mathrm{Wass}_\infty(\mu,\lambda)=&\inf\{R\mid \lambda(\Sigma^R)\geq \mu(\Sigma)~\text{for all $\Sigma\subset Q_L$} \}\nonumber\\&
\mbox{where}\quad\Sigma^R:=\{x\in Q_L~|~\text{dist}(x,\Sigma)\le R\}.
\end{align}
This is quite natural: Indeed, let $\pi$ be a plan between $\mu$ and $\lambda$ which moves points by distance at most $R$. Then it transports all the mass $\mu$ contained in a set $\Sigma$ somewhere within $\Sigma^R$, to the effect that $\mu(\Sigma)\le\lambda(\Sigma^R)$. This last inequality must hold for every $\Sigma$, giving a necessary condition for the existence of such a plan $\pi$. Strassen's theorem states that this is also sufficient.

\medskip

Starting from $\eqref{m3}$, we rely on three approximations, which we now explain. The first two are the object of the present work, while the third one has been analyzed in \cite{COP}.

\medskip

\hspace{3mm}\emph{(i) Sharp interface approximation of the area enlargement}. Given a set $\Sigma$, one may rewrite the condition on $R$ in \eqref{m3} in the more suggestive form
\begin{align*}
(\mu-\lambda)(\Sigma)\le\lambda(\Sigma^R)-\lambda(\Sigma),
\end{align*}
where we use the expression ``area enlargement'' to denote the difference on the r.~h.~s.~. For smooth sets, this is classically related to the relative perimeter through the limit 
\begin{align*}
P(\Sigma,Q_L)=\lim_{r\downarrow0}\frac{1}{r}\big(\lambda(\Sigma^r)-\lambda(\Sigma)\big),
\end{align*}
where the l.~h.~s.~denotes the length of the boundary $\partial\Sigma$ contained in $Q_L$. Thus, setting $R:=\mathrm{Wass}_\infty(\mu,\lambda)$ and assuming that in the dual formulation we can restrict our attention to sets $\Sigma$ which are nearly flat at some scale $\gg R$, we have\footnote{with the expression "if $A_1\ll A_2$ then $B_1\approx B_2$", we mean that for every $\epsilon$ there exists $\delta>0$ such that if $A_1\le\delta A_2$, then $(1-\epsilon)B_1\le B_2\le(1+\epsilon)B_1$.}
\begin{align}\label{e8}
\lambda(\Sigma^R)-\lambda(\Sigma)\approx R P(\Sigma,Q_L).
\end{align}
This replacement suggests that
\begin{align}\label{e7}
\mathrm{Wass}_\infty(\mu,\lambda)\approx\max_{\Sigma}\frac{(\mu-\lambda)(\Sigma)}{P(\Sigma,Q_L)}.
\end{align}
The numerator on the r.~h.~s.~is called the discrepancy of $\Sigma$. Such a maximal ratio has already appeared, in a similar fashion, in the work of Leighton and Shor (see Section~\ref{S:2}).
We rigorously prove this first approximation in Subsection~\ref{SS:Linfty} and \ref{SS:dual}.

\medskip

\hspace{3mm}\emph{(ii) From shot noise to white noise.} We observe that, by the central limit theorem, the centered binomial process $\mu-\lambda$ resembles a two-dimensional white noise $\xi$ at mesoscopic scales $1\ll l\ll L$. This clearly does not hold down to scale 1 due to the point process structure, and up to scale $L$ due to the total mass constraint $(\mu-\lambda)(Q_L)=0$. The latter can be fixed by adding a volume compensation, so that
\begin{align}\label{e1}
\mu(\Sigma)-\lambda(\Sigma)\approx \int_\Sigma\xi-\frac{\lambda(\Sigma)}{\lambda(Q_L)}\int_{Q_L}\xi=:W(\Sigma),
\end{align}
provided $\Sigma$ is regular at scale $\gg1$. This white-noise approximation has removed the regularization at scale 1 given by the point-to-point distance. We reintroduce it by restricting the configuration space in the maximization problem to
\begin{align}\label{M24}
\mathcal{P}_{L,1}:=\{\mbox{$\Sigma\subset Q_L$ is a polygon and the edge lengths of $\partial\Sigma\cap Q_L$ are $\ge1$}\}.
\end{align}
Hence, combining \eqref{e7} and \eqref{e1}, we have obtained
\begin{align}\label{M35}
\mathrm{Wass}_\infty(\mu,\lambda)\approx\max_{\Sigma\in\mathcal{P}_{L,1}}\frac{W(\Sigma)}{P(\Sigma,Q_L)}.
\end{align}
In analogy with Leighton and Shor, we call the r.~h.~s.~ Gaussian discrepancy problem.
This second approximation is rigorously proved in Subsections~\ref{S:kmt} and~\ref{SS:smallscale}.

\medskip

\hspace{3mm}\emph{(iii) Geometric linearization of the perimeter.} The aim is to describe the maximizer $\Sigma_*$ of the Gaussian discrepancy problem on the r.~h.~s.~of \eqref{M35}. An easy but fundamental observation is that one can pass from a ratio to an additive formulation, namely
\begin{align*}
&\mbox{$\Sigma_*$ maximizes $W(\Sigma)/P(\Sigma,Q_L)$}\quad\mbox{if and only if}\\&\mbox{$\Sigma_*$ maximizes $W(\Sigma)-I_LP(\Sigma,Q_L)$}\quad\mbox{where}\quad I_L:=\max_{\Sigma\in\mathcal{P}_{L,1}}W(\Sigma)/P(\Sigma,Q_L).
\end{align*}
The second form, rephrased as $\min P-\epsilon W$ with $\epsilon:=I_L^{-1}\ll1$, is at the core of a large-scale regularity theory for $\Sigma_*$, which makes use of the concept of perimeter almost-minimizers and has been studied in \cite{RW}. It tells us that the optimizer $\Sigma_*$ can be locally described, in its own coordinate system, as the subgraph of a function $h_*$. Writing the energy in terms of the latter, we discover that $h_*$ is the maximizer of
\begin{align}\label{M37}
\int dx\int_0^{h(x)}dy~\xi-I_L\int dx\big(\sqrt{1+(\frac{dh}{dx})^2}-1\big).
\end{align}
The geometric linearization, which is a common step in the study of regularity of minimal surfaces, amounts to replacing the second term, the perimeter increment, with the Dirichlet energy:
\begin{align*}
\mbox{if}~|\frac{dh}{dx}|\ll1\quad\mbox{then}\quad\int dx\big(\sqrt{1+(\frac{dh}{dx})^2}-1\big)\approx\int dx\frac{1}{2}(\frac{dh}{dx})^2.
\end{align*}
Substituting into \eqref{M37}, we obtain the action \eqref{M36}.

\medskip

This last geometric linearization has been made rigorous in \cite{COP}, where it has been used to connect the asymptotic behavior of the Gaussian discrepancy problem and the $(1+1)$-dimensional problem. The main result obtained there is the following.

\begin{theorem}[{\cite[Corollary 1]{COP}}]\label{T:4}
\begin{align*}
\lim_{L\uparrow\infty}\frac{1}{\ln^{3/4}L}\E  \sup_{\Sigma\in \mathcal{P}_{L,1}}\frac{W(\Sigma)}{P(\Sigma,Q_L)}=\big(\frac{4}{3}\lim_{L\uparrow\infty}\frac{\mathbb{E}A_L}{\ln L}\big)^{3/4}.
\end{align*}
\end{theorem}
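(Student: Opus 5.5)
Theorem~\ref{T:4} is quoted from \cite[Corollary 1]{COP}, so here I only sketch how that argument is organized. The plan is to compare the two sides of the identity through the additive reformulation described in (iii). Set $I_L:=\sup_{\Sigma\in\mathcal{P}_{L,1}}W(\Sigma)/P(\Sigma,Q_L)$. A maximizer $\Sigma_*$ of the ratio also maximizes $W-I_LP$, with maximal value $0$. In the rescaled variables the linearized problem should behave like $\max_h (W-\lambda D)(h)$ with $\lambda\sim I_L$. Brownian scaling of $\xi$ and the definition \eqref{M38} suggest that this maximum is of order $L\,\lambda^{-1}\ln(\cdot)$ per unit length, with the logarithm coming from the number of active dyadic scales.

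\textbf{Upper bound.} Given a competitor $\Sigma$, I would decompose $\partial\Sigma$ at a mesoscopic scale into pieces that are graphs over segments of length $\ell$. Regularity for almost-minimizers of $P-\epsilon W$ with $\epsilon=I_L^{-1}$ (as in \cite{RW}) gives small slopes on the scales that matter. On such graphs $\sqrt{1+h'^2}-1\approx\frac12 h'^2$. The white-noise contribution of each piece is then bounded by the linearized action for the tilt $I$. A Lipschitz-type a priori estimate keeps the error from the non-graphical parts of lower order.

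\textbf{Self-consistency for $I$.} By scaling, $\max(W-ID)$ over length $\ell$ is approximately $\ell\,\frac{1}{I}\,c\ln(\ell)$ with $c=\lim\mathbb{E}A_L/\ln L$. The key subtlety is that the logarithm is not $\ln L$: after rescaling by $I$, the effective ratio of cutoffs is about $L/I^{2}$ or similar, and it is affected by the slope constraint. The contribution at scale $r$ must be balanced against the perimeter. Setting the total gain equal to the perimeter cost $I\cdot P$ gives the relation
\[
I^2\approx c\,\ln\bigl(L\,I^{-\alpha}\bigr)\cdot(\text{geometric factor}).
\]
Optimizing over the range of scales where linearization is valid (slopes $\lesssim1$) should produce
\[
I^{4/3}\approx\tfrac43\,c\ln L,
\]
that is, $I\approx(\tfrac43 c\ln L)^{3/4}$. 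Here the exponent $3/4$ and the factor $4/3$ come from integrating $dr/r$ over the scales with an $I$-dependent upper cutoff, equivalently from the relation $\int^{\ln L} I^{-2/3}\,d\ln r$.

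\textbf{Lower bound.} I would construct $\Sigma$ as the subgraph of a near-maximizer $h$ of the linearized action, built on strips, with the scale-dependent penalization $I(r)$. The ratio $W/P$ is then estimated using concentration: Gaussian concentration of $W$ and of $A_L$ turns statements in expectation into statements with high probability.

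\textbf{Main obstacle.} I expect the upper bound to be the hardest step, because it requires controlling all sets in $\mathcal{P}_{L,1}$ rather than only graphs, with errors of order $o(\ln^{3/4}L)$ uniformly. My first attempt would be a multiscale chaining argument combined with the almost-minimizer regularity theory.
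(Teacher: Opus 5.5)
The paper gives no proof of this statement (it is imported from \cite[Corollary 1]{COP}), so your sketch has to stand on its own. Its quantitative core fails exactly where the constant is decided.

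\textbf{The scaling is wrong.} Rescaling only the vertical variable, $h=a\hat h$, preserves the configuration space \eqref{M34}. The field $\hat h\mapsto W(a\hat h)$ has the law of $\hat h\mapsto a^{1/2}W(\hat h)$, while $D(a\hat h)=a^2D(\hat h)$. Taking $a=\lambda^{-2/3}$ shows that $\max_h(W-\lambda D)$ has the law of $\lambda^{-1/3}LA_L\approx c\,\lambda^{-1/3}L\ln L$, where $c:=\lim_L\mathbb{E}A_L/\ln L$. It is not $\lambda^{-1}L\ln(\cdot)$. With your exponent the balance gives $I^2\approx c\ln L$, and the later $I^{4/3}\approx\frac43c\ln L$ is asserted, not derived. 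Nor can an $I$-dependent cutoff create the $\frac43$: it only changes $\ln L$ by $O(\ln\ln L)$.

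\textbf{The upper-bound step would prove something false.} Bounding each graph piece by the linearized action with the single tension $I$ gives $0=\max_\Sigma(W-I_LP)\le O(L)-I_LL+cI_L^{-1/3}L\ln L$. That is, $I_L\le(c\ln L)^{3/4}(1+o(1))$, which contradicts the statement by the factor $(\frac43)^{3/4}$. So that step must fail, and one can see why. The envelope theorem applied to $\lambda\mapsto\mathbb{E}\max_h(W-\lambda D)\propto\lambda^{-1/3}$ gives $\lambda\,\mathbb{E}D(h_\lambda)=\frac13\mathbb{E}\max_h(W-\lambda D)$. Hence at tension $I\sim\ln^{3/4}L$ the optimizer has $D(h)$ comparable to $L$, and its aggregate slopes are of order one. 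So $\sqrt{1+h'^2}-1\approx\frac12h'^2$ is not available globally. Moreover, since $\sqrt{1+h'^2}-1\le\frac12h'^2$, that replacement goes the wrong way for an upper bound.

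\textbf{The missing idea is a scale-dependent, renormalized line tension.} The linearization must be performed one octave at a time, relative to the curve's own coarse-grained version; this is what ``locally, in its own coordinate system'' in (iii) has to mean. Let $\Lambda(r)$ be such that the optimal value of (white noise gained at scales below $r$) minus $I$ times (final length) equals $-\Lambda(r)$ times the length of the curve resolved at scale $r$. By statistical homogeneity this value is proportional to that length.
\begin{itemize}
\item Then $\Lambda(1)=I$.
\item On a single octave the slopes are only $O(\Lambda^{-2/3})$, so that octave is the linearized problem with tension $\Lambda$.
\item By the scaling above and equipartition over scales \cite[Theorem 2]{OP}, it gains $c\Lambda^{-1/3}$ per unit length and per unit of $\ln r$.
\end{itemize}
Hence
\[
\frac{d\Lambda}{d\ln r}=-c\,\Lambda^{-1/3},\qquad\mbox{i.e.}\qquad\frac{d}{d\ln r}\Big(\frac34\Lambda^{4/3}\Big)=-c .
\]
Since $\max_\Sigma(W-I_LP)=0$ and the noise gained at scale $L$ is only $O(L)$, one has $\Lambda(L)=O(1)$. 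Integrating over $\ln r\in(0,\ln L)$ yields $\frac34I_L^{4/3}\approx c\ln L$, which is exactly the stated constant. Equivalently, $\max(W-\Lambda P)$ has correlation length $\exp(\frac{3}{4c}\Lambda^{4/3})$, and $I_L$ is the tension at which it equals $L$.

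\textbf{Consequences for your construction.} The ``$I(r)$'' of your lower bound must be this $\Lambda(r)$. The competitor must be built as successive normal graphs over the previous curve, not as one subgraph over a flat base. Heuristically, the mean-square turning added per unit of $\ln r$ is $1/(2\ln(L/r))$, so the optimal curve turns by order $(\ln\ln L)^{1/2}$ and its length grows like $L\ln^{1/4}L$. Your upper bound has to be organized around the same scale-by-scale renormalization; a comparison with a fixed-tension linearized action cannot give the sharp constant.
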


\medskip

\subsection{Extension to bipartite matching}
We conclude the introduction by describing a completely discrete variant of the previous optimal transport problem. Given two sets of independent random variables $(X_i)_{i=1}^N$ and $(Y_i)_{i=1}^N$ with common law $\lambda$, we are interested in the behavior of
\begin{align*}
\min_{\pi}\max_{i=1}^N{|X_i-Y_{\pi(i)}}|,
\end{align*}
for large $N$, where the minimum is taken over permutations $\pi$ of $\{1,...,N\}$. This can be seen as a discrete version of the problem treated before. Indeed, by defining the random measures
\begin{align*}
    \mu_X=\sum_{i=1}^{N}\delta_{X_i} \quad\text{and}\quad\mu_Y=\sum_{i=1}^{N}\delta_{Y_i},
\end{align*}
we recover
\begin{align*}
    \mathrm{Wass}_{\infty}(\mu_X,\mu_Y)= \min_{\pi}\max_{i=1}^N{|X_i-Y_{\pi(i)}}|.
\end{align*}

The techniques developed before allow us to show the following.

\begin{theorem}\label{T:3}
    
    \begin{align*}
        \lim_{L\uparrow\infty}\frac{\E \mathrm{Wass}_{\infty}(\mu_X,\mu_Y)}{\ln^{3/4}L}=\sqrt 2\lim_{L\uparrow\infty}\frac{\E \mathrm{Wass}_{\infty}(\mu,\lambda)}{\ln^{3/4}L}.
    \end{align*}
\end{theorem}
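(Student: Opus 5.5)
The plan is to run the same three-step reduction that proves Theorem~\ref{T:1}, now applied to the pair $(\mu_X,\mu_Y)$, and to notice that the only change is the variance of the limiting white noise. By Strassen's theorem the dual formulation \eqref{m3} still holds with $\lambda$ replaced by $\mu_Y$:
\begin{align*}
\mathrm{Wass}_\infty(\mu_X,\mu_Y)=\inf\{R\mid \mu_Y(\Sigma^R)\ge\mu_X(\Sigma)~\text{for all}~\Sigma\subset Q_L\}.
\end{align*}
The condition can be rewritten as
\begin{align*}
(\mu_X-\mu_Y)(\Sigma)\le \mu_Y(\Sigma^R)-\mu_Y(\Sigma).
\end{align*}
We split the right-hand side as $\lambda(\Sigma^R\setminus\Sigma)+(\mu_Y-\lambda)(\Sigma^R\setminus\Sigma)$. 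On the relevant sets $\Sigma$, which are nearly flat at scales $\gg R\sim\ln^{3/4}L$, the first term is $\approx RP(\Sigma,Q_L)$ by \eqref{e8}. The second term is a fluctuation over a strip of width $R$ and length $P$. We need it to be $o(RP)$ uniformly over the competing $\Sigma$, at least on the scales that matter. I would obtain this from a union or chaining bound: there are $e^{O(P)}$ polygons of perimeter $P$ at unit resolution, and for a single strip the fluctuation is of order $\sqrt{RP}$, with exponential tails. The crude estimate $\sqrt{RP}\cdot\sqrt{P}$ is not good enough. It has to be refined by discretizing only at the mesoscopic scale $\ell$ with $R\ll\ell$, so that the entropy is $e^{O(P/\ell)}$, while the fluctuation still gains because of the width $R$. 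This is essentially the same perturbation already used in Subsections~\ref{SS:Linfty} and \ref{SS:dual} to control the enlargement, so I would imitate that argument.

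Second, the analogue of \eqref{e1}. The difference $\mu_X-\mu_Y$ has no mass defect, and by KMT coupling (Subsection~\ref{S:kmt}) applied separately to $\mu_X-\lambda$ and $\mu_Y-\lambda$, it is approximated by $W_1(\Sigma)-W_2(\Sigma)$, where $W_1,W_2$ are independent volume-compensated white noises. Here the volume compensations of the two noises cancel. The combination $\tfrac{1}{\sqrt2}(\xi_1-\xi_2)$ is again a white noise $\xi$, so $W_1-W_2$ has the same law as $\sqrt2\,W$ with $W$ as in \eqref{e1}. Together with the small-scale argument of Subsection~\ref{SS:smallscale} restricting to $\mathcal P_{L,1}$, this yields
\begin{align*}
\E\,\mathrm{Wass}_\infty(\mu_X,\mu_Y)=(1+o(1))\,\sqrt2\,\E\sup_{\Sigma\in\mathcal P_{L,1}}\frac{W(\Sigma)}{P(\Sigma,Q_L)}.
\end{align*}
Theorem~\ref{T:4} together with Theorem~\ref{T:1} then identifies the right-hand side with $\sqrt2$ times the limit for $\mathrm{Wass}_\infty(\mu,\lambda)$, which gives the claim.

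The main obstacle is the first step: showing that replacing the reference measure $\lambda$ by the random $\mu_Y$ inside the area enlargement costs only a lower-order error, uniformly over the sets that nearly optimize the ratio. This concerns the upper bound on $\mathrm{Wass}_\infty$, where all $\Sigma$ must be controlled. I would first try to reduce to sets with $P(\Sigma,Q_L)\gtrsim$ a large power of $\ln L$, handling small sets by a separate trivial bound. On the large sets I would use the strip fluctuation estimate with the entropy counted at scale $\ell=R\ln^{\delta}L$.
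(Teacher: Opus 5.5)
There is a genuine gap, in your first step, and it carries over into the second. With the atomic measures themselves the perimeter ratio is the wrong object: $\sup_{\Sigma\subset Q_L}(\mu_X-\mu_Y)(\Sigma)/P(\Sigma,Q_L)=+\infty$ (take a tiny disc around one $X_i$). The upper bound on $\mathrm{Wass}_\infty$ has to hold for \emph{all} $\Sigma$, so it cannot be reduced to a ratio over ``nearly flat sets'' without a regularizing argument. Restricting to such sets is exactly the heuristic (i) of the introduction. Discarding small-perimeter sets does not make the remaining ones regular at scale $\ell\gg R$, yet your entropy count $e^{O(P/\ell)}$ presupposes that.

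Your strip estimate also only treats $\mu_Y$ in the enlargement. In the coarea argument of Lemma~\ref{L:1}, the numerator must also be moved from $\Sigma$ to $\Sigma^r$, at the cost of the term $(\rho-1)\lambda(\Sigma^r\setminus\Sigma)$. That term is controlled only through $\|\rho-1\|_{L^\infty}$, and for atomic measures this control fails for arbitrary $\Sigma$. The lower bound (Lemma~\ref{L:2}) rests on the maximizer being a $\kappa$-perimeter minimizer with $C^{1,1}$ boundary, and there is no maximizer when the numerator is atomic.

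Your second step invokes the KMT subsection, but Lemma~\ref{L:4} is stated for coarse-grained densities and its error is $l^{-1}\ln L$. Without coarse-graining ($l=1$) the KMT bound is only $O(\ln L)$, which swamps the $\ln^{3/4}L$ leading order. As an aside, your ``crude'' bound $\sqrt{RP}\cdot\sqrt{P}=P\sqrt{R}$ is already $o(RP)$, so that part of the sketch is not internally consistent.

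The missing idea is the coarse-graining of Subsection~\ref{SS:Linfty}, applied to both measures. Replace $\mu_X,\mu_Y$ by $\rho_l\lambda,\sigma_l\lambda$ at a scale $\ln^{1/2}L\ll l\ll\ln^{3/4}L$. The triangle inequality costs only $\mathrm{Wass}_\infty(\mu_X,\rho_l\lambda)+\mathrm{Wass}_\infty(\mu_Y,\sigma_l\lambda)\lesssim l=o(\ln^{3/4}L)$. Lemma~\ref{L:3} (Bernstein plus a union bound over just $(L/l)^2$ cubes) then makes both densities uniformly $1+o(1)$ with overwhelming probability.

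After that, no uniform control of strip fluctuations over a class of curves is needed. The obstacle you flag, a random reference measure in the enlargement, becomes the one-line bound $\sigma\lambda(\Sigma^R\setminus\Sigma)\ge(1-\|\sigma-1\|_{L^\infty})\lambda(\Sigma^R\setminus\Sigma)$. The dual step is then the deterministic Proposition~\ref{P:2}: the coarea argument for the upper bound, and $\kappa$-minimality with $\kappa=\|\rho-\sigma\|_{L^\infty}/R_*$ for the lower bound. Lemma~\ref{L:5}, applied to each measure with independent noises, together with Lemma~\ref{L:4}, gives a Gaussian approximation error $l^{-1}\ln L=o(\ln^{3/4}L)$. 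From there your remaining steps go through as you describe: $W^1_l-W^2_l$ has the law of $\sqrt2\,W_l$, and you conclude with Lemma~\ref{L:6} and Theorems~\ref{T:4} and \ref{T:1}.
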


Let us briefly comment on the appearance of the additional $\sqrt 2$-factor: Performing the first approximation, we are left with the study of
\begin{align*}
    \mathrm{Wass}_\infty(\mu_X,\mu_Y)\approx\max_{\Sigma\subset Q_L}\frac{(\mu_X-\mu_Y)(\Sigma)}{P(\Sigma,Q_L)}.
\end{align*}
In the second step, we can then approximate both $\mu_X-\lambda$ and $\mu_Y-\lambda$ by independent white noises $W_X$ and $W_Y$ respectively. Because of independence, $W:=(W_X-W_Y)/\sqrt{2}$ again defines a white noise, amounting to
\begin{align*}
    \max_{\Sigma\in\mathcal{P}_{L,1}}\frac{W_X(\Sigma)-W_Y(\Sigma)}{P(\Sigma,Q_L)}=\sqrt{2}\max_{\Sigma\in\mathcal{P}_{L,1}}\frac{W(\Sigma)}{P(\Sigma,Q_L)}.
\end{align*}

\medskip

\section{Motivation and context}\label{S:2}

The random semi-discrete optimal matching problem \eqref{a14} arose in the computer science literature in connection with the average-case analysis of bin packing algorithms. That bin packing can be recast as a planar matching
problem was already used by Karp, Luby and Marchetti-Spaccamela \cite{KLM}, who introduced the up-right matching and proved preliminary, non-optimal bounds. Motivated by this connection, Leighton and Shor obtained the correct asymptotic behavior \eqref{M18} by analyzing the discrepancy problem %
\begin{align}
\mathrm{Wass}_\infty(\mu,\lambda)\sim\max_{\Sigma}\frac{\mu(\Sigma)-\lambda(\Sigma)}{P(\Sigma)}\quad\mbox{(compare with \eqref{e7})},\label{M19}
\end{align}
where the maximization is taken over lattice animals $\Sigma\subset Q_L$ (cf.~definition of $\mathcal{A}_1$ in Section~\ref{S:kmt}). 
In \cite{T2}, Talagrand embedded the problem into a more general abstract framework, applying the then newly developed machinery of chaining and majorizing measures (see~\cite{T}). 

\medskip

The matching problem, which can be posed in arbitrary dimension $d$, is critical in dimension 2. This can be directly seen from the dual relation in \eqref{M19}. Indeed, the discrepancy of the number of points scales like
\begin{align*}
|\mu(\Sigma)-\lambda(\Sigma)|\sim\sqrt{\lambda(\Sigma)}\sim\text{length}^{d/2},
\end{align*}
for every $\Sigma$ living at a mesoscopic scale $1\ll l\ll L$. This equals the scaling of the surface energy
\begin{align*}
P(\Sigma)\sim\text{length}^{d-1}
\end{align*}
if and only if $d=2$. This results in the problem's criticality, in which case it is common belief that the logarithmic divergences arise from the collective contribution of all scales. A form of equipartition of energy among scales has been rigorously proven for example in \cite[Theorem 2]{OP} for the $(1+1)$-dimensional problem \eqref{M38}.

\medskip

Similar critical behavior has been observed for the $p$-Wasserstein distance\footnote{With this normalization, $\text{Wass}_p(\mu,\lambda)$ is of the order of the typical distance by which the points are moved.}
\begin{align*}
\mathrm{Wass}_p^p(\mu,\lambda)=\inf\big\{\frac{1}{L^2}\int |x-y|^p\pi(dx,dy)~\big|~\mbox{coupling $\pi$ between $\lambda$ and $\mu$}\big\},
\end{align*}
for which in \cite{AKT} it has been shown that
\begin{align}\label{akt}
\mathbb{E}^{1/p}\mathrm{Wass}^p_p(\mu,\lambda)\sim_p\ln^{1/2}L\quad\mbox{for all}~1\le p<\infty.
\end{align}
The particular case $p=2$ is by far the best understood: In \cite{CLPS}, the authors proposed a heuristic based on a formal linearization of the Monge-Amp\`ere equation to the Poisson equation, which allowed to identify the explicit constant in the leading-order term of the expansion
\begin{align*}
\mathbb{E}^{1/2}\mathrm{Wass}_2^2(\mu,\lambda)\approx\textstyle\frac{1}{\sqrt{4\pi}}\ln^{1/2}L.
\end{align*}
This prediction has successfully been made rigorous in \cite[Theorem 1.1]{AST}, and an almost-optimal error estimate has been obtained in \cite[Theorem 1]{GHO}. 

\medskip

For the case $p\in(1,\infty),p\neq2$, the problem has recently been reduced to understanding the solution of the $p'$-Laplace equation with white-noise forcing (see~\cite{ATV}) and the existence of a limiting constant has been announced recently by Armegioiu, Goldman, Grotto and Trevisan. For this, one may start from the Benamou-Brenier formula,
\begin{align*}
\frac{1}{p}\mathrm{Wass}_p^p(\mu,\lambda)&=\inf\big\{\frac{1}{L^2}\int_0^1\int\frac{1}{p}|\frac{j_t}{\rho_t}|^{p} d\rho_t\mid \partial_t\rho_t+\mbox{div}j_t=0,\rho_0=\mu,\rho_1=\lambda\big\}.
\end{align*}
In the regime $\mu\approx \lambda$, we may approximate $\rho_t\approx 1$ and perform the following geometric linearization: Defining the time-averaged flux $\bar j:=\int_{0}^{1}j_t$, we have
\begin{align*}
\frac{1}{p}\mathrm{Wass}_p^p(\mu,\lambda)\approx \inf\big\{\frac{1}{L^2}\int\frac{1}{p}|\bar j|^p \mid \mbox{div}\bar j=\mu-\lambda \big\}.
\end{align*}
Passing to the dual of this convex minimization problem, we obtain
\begin{align*}
\frac{1}{p}\mathrm{Wass}_p^p(\mu,\lambda)\approx\sup\big\{\frac{1}{L^2}\int\phi(\mu-\lambda)-\frac{1}{p'}|\nabla\phi|^{p'}\big\}.
\end{align*}
so that one can see the appearance of the $p'$-Laplacian on the r.~h.~s.~. Now replace $\phi$ by $t\phi$  and optimize in $t$ to obtain
\begin{align*}
\frac{1}{p}\mathrm{Wass}_p^p(\mu,\lambda)\approx\frac{1}{p}\sup\{ \frac{1}{L^2}|\frac{\int \phi (\mu-\lambda)}{(\int|\nabla\phi|^{p'})^{1/p'}}|^p\}
\end{align*}
From this last expression, one may take the formal limit $p\to\infty$; in view of the $1$-homogeneity, it is sufficient to consider indicator functions $\phi=I(\Sigma)$, thus recovering the discrepancy problem \eqref{M19}. With this point of view, Proposition~\ref{P:1} below is the analog of \cite[Theorem 1.2]{ATV}.

\medskip

Morally, our strategy resembles the one in \cite{AST}, in the sense that both approaches linearize the dual formulation of the Wasserstein distance.
However, \cite{AST} relies on explicit gradient estimates for the Green function of the Poisson equation, while this work proves existence of the limit by approximating the random forcing by a Gaussian version of it, which has recently been treated in \cite{COP}. The main tool of this approximation is the celebrated KMT-coupling established in \cite{KMT}, a streamlined exposition of this rather delicate construction can be found in \cite[Chapter 10]{Pol}. This method is not new in the matching literature, as it has already been employed in the aforementioned work by Ajtai, Koml\'os and Tusn\'ady \cite{AKT} to prove upper bounds for the transport distance $\mathrm{Wass}_p(\mu,\lambda)$. In contrast, we are in need of an approximation which is uniform over the class $\mathcal{P}_{L,1}$. While this does not fall within the scope of the multivariate constructions of \cite{Mas}, our method is somewhat reminiscent of his approach.

\section{Proof}

\subsection{Reduction to \texorpdfstring{$L^\infty$ }{} densities}\label{SS:Linfty}
The identification of the dual problem as in \eqref{e7} is simpler when  $\mu$ is a measure with $L^\infty$ density. Hence, as a first step we show that we may replace the singular measure $\mu$ by a measure which is close to $\lambda$ in $L^\infty$. This is done by coarse-graining $\mu$. Given a mesoscopic scale $1\le l\le L$ such that $L/l\in\mathbb{N}$, we partition (up to $\lambda$-negligible sets) $Q_L=\cup_{i} Q_{l,i}$ into $(L/l)^2$ subcubes of size $l$ and define
\begin{align}\label{M20}
\mu_l=\rho_l\lambda:=\sum_{i=1}^{(L/l)^2}\frac{\mu(Q_{l,i})}{\lambda(Q_{l,i})}\lambda\llcorner Q_{l,i}.
\end{align}
We note that $\mu_l$ has the same mass as $\lambda$ (with probability 1). Moreover
\begin{align*}
&\mbox{$\mu(Q_{l,i})=\mu_l(Q_{l,i})$ for every $i$}\quad\mbox{to the effect that}\quad \mathrm{Wass}_\infty(\mu,\mu_l)\le\sqrt{2}l,
\end{align*}
where the last inequality follows by taking any coupling $\pi$ of $\mu$ and $\mu_l$ supported on $\cup_i Q_{l,i}\times Q_{l,i}$. By the triangle inequality, we have
\begin{align}\label{M2}
|\mathrm{Wass}_\infty(\mu,\lambda)-\mathrm{Wass}_\infty(\mu_l,\lambda)|\le \mathrm{Wass}_\infty(\mu,\mu_l)\le\sqrt{2}l,
\end{align}
thus it suffices to study the asymptotics of $\mathrm{Wass}_\infty(\mu_l,\lambda)$ provided we choose
\begin{align}\label{M1}
l\ll\ln^{3/4}L.
\end{align}

\medskip

Next we show that $\mu_l$ and $\lambda$ are close in $L^\infty$ with overwhelming probability as long as
\begin{align}\label{M3}
l\gg\ln^{1/2}L.
\end{align}
This is the main advantage in passing from $\mu$ to its coarse-grained version $\mu_l$. Note that this is compatible with the condition \eqref{M1}. On the one hand this would not be true for $p<\infty$, since \eqref{M1} should be replaced by $l\ll\ln^{1/2}L$ (see~\eqref{akt}), making $p=\infty$ easier; on the other hand, for finite $p$ the estimate \eqref{M2} is clearly suboptimal, in view of averaging effects. However, for $p=\infty$, \eqref{M2} is essentially optimal, in view of the fact that the $\mathrm{Wass}_\infty$-distance may be influenced by sporadic small-scale defects. The existence of some room between the regimes \eqref{M1} and \eqref{M3} has in fact to be expected, since the problem is critical and as such should not be influenced by the small-scale behavior.

\begin{lemma}\label{L:3}
If $l\gg\ln^{1/2}L$ and $\nu\gg1$, then\footnote{a statement of the form ''if $A_1\ll A_2$ then $B_1\lesssim B_2$'' means that there exist two universal constants $\varepsilon>0,C<+\infty$ such that if $A_1\le\varepsilon A_2$ then $B_1\le CB_2$.} 
\begin{align*}
\ln\mathbb{P}(\|\rho_l-1\|_{L^\infty}\ge\nu l^{-1}\ln^{1/2}L)\lesssim -\nu\ln L.
\end{align*}
\end{lemma}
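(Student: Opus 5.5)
The argument is a union bound over the $(L/l)^2$ subcubes combined with a Chernoff (Bernstein) estimate for binomial variables. On each subcube $\rho_l$ is constant and equals $\mu(Q_{l,i})/l^2$. Here $\mu(Q_{l,i})\sim\mathrm{Bin}(N,p)$ with $N=L^2$ and $p=l^2/L^2$, so its mean is $l^2$. The event $\|\rho_l-1\|_{L^\infty}\ge t$ with $t:=\nu l^{-1}\ln^{1/2}L$ is therefore the union over $i$ of the events $|\mu(Q_{l,i})-l^2|\ge l^2 t=\nu l\ln^{1/2}L$. This gives
\begin{align*}
\mathbb{P}(\|\rho_l-1\|_{L^\infty}\ge t)\le (L/l)^2\max_i\mathbb{P}\big(|\mu(Q_{l,i})-l^2|\ge \nu l\ln^{1/2}L\big).
\end{align*}

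To bound each term I will use Bernstein's inequality for a sum of $N$ i.i.d.\ centered Bernoulli variables with total variance at most $l^2$ and increments bounded by $1$. For a deviation $s$ it gives
\begin{align*}
\mathbb{P}(|\mu(Q_{l,i})-l^2|\ge s)\le 2\exp\Big(-\frac{s^2}{2(l^2+s/3)}\Big).
\end{align*}
With $s=\nu l\ln^{1/2}L$ we get $s^2=\nu^2l^2\ln L$. There are two regimes.

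\emph{Case $s\le l^2$, i.e.\ $\nu\ln^{1/2}L\le l$.} The exponent is at least $c\,\nu^2\ln L$.

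\emph{Case $s> l^2$.} The exponent is at least $c\,s=c\,\nu l\ln^{1/2}L$. Since $l\gg\ln^{1/2}L$, this is at least $c\,\nu\ln L$ times a large factor, so in particular it is $\ge c\nu\ln L$.

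In both cases the exponent is $\gtrsim \nu\ln L$ once $\nu\ge1$, using $\nu^2\ge\nu$ in the first case. Note that the hypothesis $l\gg\ln^{1/2}L$ is used precisely in the second, large-deviation (Poissonian) regime. This is where it matters that only a linear rate in $\nu$ is claimed, rather than the Gaussian rate $\nu^2$.

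It remains to absorb the union bound prefactor $(L/l)^2\le L^2=e^{2\ln L}$. Since $\nu\gg1$, we may take $c\nu\ge 4$, so that $2\ln L\le \frac{c}{2}\nu\ln L$. Then
\begin{align*}
\ln\mathbb{P}\le \ln 2+2\ln L-c\nu\ln L\le -\tfrac{c}{4}\nu\ln L
\end{align*}
for $L\ge2$. The only delicate point is keeping track of the two Bernstein regimes; everything else is routine.
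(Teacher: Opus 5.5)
Your proof is correct and follows the paper's argument: a union bound over the $(L/l)^2$ cells, Bernstein's inequality for the binomial counts $\mu(Q_{l,i})$, the assumption $l\gtrsim\ln^{1/2}L$ to make the exponent $\gtrsim\nu\ln L$, and $\nu\gg1$ to absorb the $2\ln L$ coming from the union bound. Your two-regime case split is just an unpacked version of the paper's one-line bound of the Bernstein denominator, $2l^2+\nu l\ln^{1/2}L\le\nu l(2l+\ln^{1/2}L)$ for $\nu\ge1$, after which $\ln^{1/2}L$ is absorbed into $l$.
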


{\sc Proof.} This follows from the Bernstein inequality \cite[Theorem 2.10]{BLM}, applied to the binomials $\mu(Q_{l,i})$ of parameters $n=L^2$ and $p=(l/L)^2$. Appealing to the union bound, we estimate for $\nu\gg1$
\begin{align*}
    \mathbb{P}(\|\rho_l-1\|_{L^\infty}\ge\nu l^{-1}\ln^{1/2}L)&\overset{\eqref{M20}}{\leq} \sum_{i=1}^{(L/l)^2}\mathbb{P}(|\mu(Q_{l,i})-l^2|\ge\nu l\ln^{1/2}L)\\
    &\leq (\frac L l)^2\exp(-\frac{\nu ^2l^2\ln L}{2l^2+\nu l\ln^{1/2}L})\\
    &\leq\exp(-\frac{\nu^2l\ln L}{\nu(l+\ln^{1/2}L)}+2\ln L).
\end{align*}
Since we can absorb $\ln^{1/2}L$ into $l$ in the denominator, we conclude.
\qed

\medskip

\subsection{The dual problem}\label{SS:dual}
In this section, we justify the approximation \eqref{e7} for $\mathrm{Wass}_\infty(\rho_l\lambda,\lambda)$.
The main result is the following.

\begin{proposition}\label{P:1}
Let $\rho$ be a density with $\int\rho d\lambda=\int 1d\lambda$. If $\|\rho-1\|_{L^\infty}\ll1$, 
\begin{align*}
\mathrm{Wass}_\infty(\rho\lambda,\lambda)=\big(1+O(\|\rho-1\|_{L^\infty})\big)\max_{\Sigma\subset Q_L}\frac{(\rho-1)\lambda(\Sigma)}{P(\Sigma,Q_L)}.
\end{align*}
\end{proposition}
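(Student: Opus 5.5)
We prove the two inequalities separately, writing $\delta:=\|\rho-1\|_{L^\infty}$, $D(\Sigma):=(\rho-1)\lambda(\Sigma)$ and $M:=\max_{\Sigma}D(\Sigma)/P(\Sigma,Q_L)$. Both directions start from Strassen's dual formulation \eqref{m3}. By \eqref{m3}, $\mathrm{Wass}_\infty(\rho\lambda,\lambda)\le R$ holds if and only if $\lambda(\Sigma^R\setminus\Sigma)\ge D(\Sigma)$ for every $\Sigma\subset Q_L$.

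\emph{Upper bound.} Fix $\Sigma$ with $D(\Sigma)>0$; sets with $D(\Sigma)\le0$ satisfy the condition trivially. Set $f(r):=\lambda(\Sigma^r\setminus\Sigma)$. By the coarea formula applied to $\mathrm{dist}(\cdot,\Sigma)$, which has $|\nabla\mathrm{dist}|=1$ a.e.\ outside $\Sigma$, we get $f'(r)=P(\Sigma^r,Q_L)$ for a.e.\ $r$.

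By the definition of $M$ and the bound $\rho-1\ge-\delta$,
\begin{align*}
P(\Sigma^r,Q_L)\ge\frac{D(\Sigma^r)}{M}\ge\frac{D(\Sigma)-\delta f(r)}{M}.
\end{align*}
Hence $f$ satisfies the differential inequality $f'\ge(D(\Sigma)-\delta f)/M$ with $f(0)=0$. ODE comparison yields
\begin{align*}
f(R)\ge\frac{D(\Sigma)}{\delta}\big(1-e^{-\delta R/M}\big).
\end{align*}
The right-hand side is at least $D(\Sigma)$ as soon as $1-e^{-\delta R/M}\ge\delta$. This holds for $R=(1+C\delta)M$ when $\delta\ll1$.

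The degenerate case $\Sigma^r=Q_L$ is handled by mass balance. Since $\int\rho\,d\lambda=\lambda(Q_L)$, we have $D(\Sigma)=-D(Q_L\setminus\Sigma)\le\delta\lambda(Q_L\setminus\Sigma)\le\lambda(Q_L\setminus\Sigma)$. This gives $\mathrm{Wass}_\infty\le(1+C\delta)M$.

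\emph{Lower bound.} Let $\Sigma_*$ be a maximizer of $D/P$. Its existence follows by BV-compactness: the relative isoperimetric inequality in $Q_L$ gives $D(\Sigma)\le\delta\min(\lambda(\Sigma),\lambda(Q_L\setminus\Sigma))\lesssim\delta L\,P(\Sigma,Q_L)$, so $M<\infty$ and maximizing sequences cannot degenerate.

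Suppose $\mathrm{Wass}_\infty\le R$ with $R\le M$. Apply the dual condition to the inner parallel set $\Sigma':=\{x\in\Sigma_*:\mathrm{dist}(x,Q_L\setminus\Sigma_*)>R\}$, where $Q_L\setminus\Sigma_*$ means the complement inside $Q_L$, so that the face $\partial Q_L$ does not count. Since $(\Sigma')^R\subset\Sigma_*$, we obtain
\begin{align*}
\lambda(\Sigma_*\setminus\Sigma')\ge D(\Sigma')\ge D(\Sigma_*)-\delta\lambda(\Sigma_*\setminus\Sigma'),
\end{align*}
that is, $(1+\delta)\lambda(\Sigma_*\setminus\Sigma')\ge MP(\Sigma_*,Q_L)$. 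It then suffices to prove the tube estimate
\begin{align*}
\lambda(\Sigma_*\setminus\Sigma')\le(1+C\delta)\,R\,P(\Sigma_*,Q_L),
\end{align*}
which yields $R\ge(1-C\delta)M$.

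The tube estimate fails for general sets, so it must come from the optimality of $\Sigma_*$. This is the main obstacle. The first variation of $D-MP$ at the maximizer shows that $\partial\Sigma_*\cap Q_L$ has distributional curvature $\kappa=(\rho-1)/M$, hence $|\kappa|\le\delta/M$. The boundary also meets $\partial Q_L$ orthogonally, by the free-boundary condition.

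I would first invoke regularity of $\Lambda$-minimizers of perimeter to get $C^{1,\alpha}$, and then bootstrap via the bounded curvature to $C^{1,1}$. With this regularity the inward normal map has focal distance at least $M/\delta\gg R$. The tube formula, reflected across $\partial Q_L$ to treat the contact points, then gives
\begin{align*}
\lambda(\Sigma_*\setminus\Sigma')\le\int_0^R\int_{\partial\Sigma_*\cap Q_L}(1+r|\kappa|)\le RP(\Sigma_*,Q_L)\big(1+\delta R/M\big).
\end{align*}
The remaining delicate point is to make sure the inner parallel set has no interactions across distinct parts of $\partial\Sigma_*$. Such interactions would only decrease $\lambda(\Sigma_*\setminus\Sigma')$, since the tube formula over-counts overlapping normal segments. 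So the inequality still holds, and one only needs the normal parametrization to cover $\Sigma_*\setminus\Sigma'$, which is true for $C^{1,1}$ boundaries.
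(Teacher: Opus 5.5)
Your proposal is correct and follows the paper's proof in all essentials: Strassen duality, then for the upper bound the coarea formula for $\mathrm{dist}(\cdot,\Sigma)$ with the parallel sets $\Sigma^r$ as competitors in the ratio. For the lower bound, the $\delta/M$-perimeter-minimality of a maximizer $\Sigma_*$ gives $C^{1,1}$ boundary curves of curvature $\le\delta/M$, and the area formula for the normal map gives the tube estimate. The curvature bound should be read off from this almost-minimality inequality, as in \eqref{M6}, rather than from an Euler--Lagrange identity $\kappa=(\rho-1)/M$, since an $L^\infty$ density has no trace on $\partial\Sigma_*$; the differences are otherwise cosmetic: you integrate a Gr\"onwall-type inequality in $r$ instead of selecting one good radius as in \eqref{m2} (slightly improving the factor to $-\ln(1-\delta)/\delta$), and you test duality with the inner parallel set of $\Sigma_*$ and the inner tube rather than with $\Sigma_*$ and its outer tube, at the cost of a harmless extra factor $1+\delta$.
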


The proof mainly consists of making the approximation \eqref{e8} rigorous. We argue differently in the upper and lower bounds.

\begin{lemma}\label{L:1}
\begin{align*}
\mathrm{Wass}_\infty(\rho\lambda,\lambda)\le(1-\|\rho-1\|_{L^\infty})^{-1}\max_{\Sigma\subset Q_L}\frac{(\rho-1)\lambda(\Sigma)}{P(\Sigma,Q_L)}.
\end{align*}
\end{lemma}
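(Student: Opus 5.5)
The plan is to verify the dual criterion \eqref{m3} directly. Write $\delta:=\|\rho-1\|_{L^\infty}<1$ and $M:=\max_{\Sigma}\frac{(\rho-1)\lambda(\Sigma)}{P(\Sigma,Q_L)}$, and set $R:=(1-\delta)^{-1}M$. By \eqref{m3}, applied with $\mu$ replaced by $\rho\lambda$, it suffices to show $\lambda(\Sigma^R)\ge\rho\lambda(\Sigma)$ for every $\Sigma\subset Q_L$. Since $\Sigma^R$ does not change under passing to the closure, while $\rho\lambda(\Sigma)\le\rho\lambda(\bar\Sigma)$, we may assume $\Sigma$ is closed. We set
\begin{align*}
D:=(\rho-1)\lambda(\Sigma),\qquad f(r):=\lambda(\Sigma^r)-\lambda(\Sigma),
\end{align*}
so the goal is $f(R)\ge D$. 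If $D\le0$ this is trivial because $f\ge0$. If $\Sigma$ has full measure, then $D=0$ by the mass constraint $\int\rho\,d\lambda=\int1\,d\lambda$, so this case is also trivial. Hence we may assume $D>0$, which also gives $0<\lambda(\Sigma^r)<\lambda(Q_L)$ in the relevant range.

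The key geometric input is a sharp-interface lower bound on the area enlargement,
\begin{align*}
f(r_2)-f(r_1)\ge\int_{r_1}^{r_2}P(\Sigma^r,Q_L)\,dr\qquad\mbox{for }0\le r_1\le r_2.
\end{align*}
This is the rigorous, one-sided version of \eqref{e8}. We would prove it via the coarea formula for the distance function $d(x)=\mathrm{dist}(x,\Sigma)$ restricted to $Q_L$. This function is $1$-Lipschitz with $|\nabla d|=1$ a.e. on $\{d>0\}$, which gives
\begin{align*}
\lambda(\{r_1<d\le r_2\})=\int_{r_1}^{r_2}\mathcal{H}^1(\{d=r\}\cap Q_L)\,dr.
\end{align*}
For a.e. $r$ we have $\mathcal{H}^1(\{d=r\}\cap Q_L)\ge P(\{d\le r\},Q_L)=P(\Sigma^r,Q_L)$. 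We can justify this either by the standard coarea inequality for $BV$ functions, or by lower semicontinuity applied to $\{d\le r\}$. I expect this measure-theoretic step to be the main technical obstacle, but it is standard.

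Next we feed the definition of $M$ into the integrand: $P(\Sigma^r,Q_L)\ge M^{-1}(\rho-1)\lambda(\Sigma^r)$. Since $|\rho-1|\le\delta$ on $\Sigma^r\setminus\Sigma$, we also have
\begin{align*}
(\rho-1)\lambda(\Sigma^r)\ge D-\delta f(r).
\end{align*}
Now we argue by contradiction: suppose $f(R)<D$. Since $f$ is nondecreasing, $f(r)<D$ for all $r\le R$, and therefore $D-\delta f(r)>(1-\delta)D$ there. Integrating the lower bound from $0$ to $R$ gives
\begin{align*}
f(R)\ge\int_0^R\frac{(1-\delta)D}{M}\,dr=\frac{R(1-\delta)}{M}\,D=D,
\end{align*}
which contradicts $f(R)<D$. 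Hence $f(R)\ge D$, i.e. $\lambda(\Sigma^R)\ge\rho\lambda(\Sigma)$ for all $\Sigma$. Strassen's theorem in the form \eqref{m3} then yields $\mathrm{Wass}_\infty(\rho\lambda,\lambda)\le R=(1-\delta)^{-1}M$, which is the assertion of Lemma~\ref{L:1}.
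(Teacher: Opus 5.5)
Your proof is correct and follows essentially the same route as the paper. Both use Strassen's dual criterion, the coarea formula for $\mathrm{dist}(\cdot,\Sigma)$ giving $\lambda(\Sigma^R)-\lambda(\Sigma)\ge\int_0^R P(\Sigma^r,Q_L)\,dr$ (the paper states it as an equality), and the enlarged sets $\Sigma^r$ as competitors in the maximal ratio, with $\rho-1$ on the collar $\Sigma^r\setminus\Sigma$ bounded by $\|\rho-1\|_{L^\infty}$. The only difference is cosmetic: the paper picks a single radius $r\in(0,R)$ with $P(\Sigma^r,Q_L)\le(\lambda(\Sigma^R)-\lambda(\Sigma))/R$ and concludes directly, whereas you apply the ratio bound at every radius and integrate, using the contradiction hypothesis $f(r)<D$ to control the collar term.
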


{\sc Proof.} Let us denote the r.~h.~s.~by $R$; by the dual formulation \eqref{m3}, we need to show for all $\Sigma\subset Q_L$, that
\begin{align}\label{M21}
\lambda(\Sigma^R)-\rho\lambda(\Sigma)\ge0.
\end{align}
Now let $d(\cdot,\Sigma)$ be the distance function to a closed $\Sigma\subset Q_L$, then $|\nabla d(\cdot,\Sigma)|=I(\Sigma^c)(\cdot)$ as $L^1$ functions  (see~\cite[Theorem 3.14]{EG}). Applying coarea formula to $d(\cdot,\Sigma)$ gives
\begin{align*}
\lambda(\Sigma^R)-\lambda(\Sigma)&=\int_{Q_L\cap \Sigma^R}|\nabla \text{dist}(x,\Sigma)|dx\\&=\int_{0}^RP(\{\text{dist}(x,\Sigma)>r\},Q_L\cap\Sigma^R)dr=\int_0^RP(\Sigma^r,Q_L)dr,
\end{align*}
which implies the existence of a radius $r\in(0,R)$ such that
\begin{align}\label{m2}
\lambda(\Sigma^R)-\lambda(\Sigma)\ge RP(\Sigma^{r},Q_L).
\end{align}

\medskip

We now turn to the l.~h.~s.~ of \eqref{M21}:
\begin{align*}
&\lambda(\Sigma^R)-\rho\lambda(\Sigma)=\big(\lambda(\Sigma^R)-\lambda(\Sigma)\big)-(\rho-1)\lambda(\Sigma)\\
&=\lambda(\Sigma^R)-\lambda(\Sigma)+(\rho-1)\lambda(\Sigma^r\backslash\Sigma)-(\rho-1)\lambda(\Sigma^r)\\
&\ge (\lambda(\Sigma^R)-\lambda(\Sigma))(1-\|\rho-1\|_{L^\infty})-(\rho-1)\lambda(\Sigma^r)&&\mbox{(since $\Sigma^r\subset\Sigma^R$)}\\
&\ge RP(\Sigma^r,Q_L)(1-\|\rho-1\|_{L^\infty})-(\rho-1)\lambda(\Sigma^r)&&\mbox{(using \eqref{m2})}.
\end{align*}
Since the last term is nonnegative by definition of $R$, we conclude.
\qed

\begin{lemma}\label{L:2}
\begin{align*}
\mathrm{Wass}_\infty(\rho\lambda,\lambda)\ge(1+\|\rho-1\|_{L^\infty})^{-1}\max_{\Sigma\subset Q_L}\frac{(\rho-1)\lambda(\Sigma)}{P(\Sigma,Q_L)}.
\end{align*}
\end{lemma}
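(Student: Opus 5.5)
The plan is to pass from the dual formulation \eqref{m3} to a flux formulation and then test it against indicator-like functions of an arbitrary set. Set $R:=\mathrm{Wass}_\infty(\rho\lambda,\lambda)$ and $\delta:=\|\rho-1\|_{L^\infty}$. Fix $\Sigma\subset Q_L$ with finite perimeter; by approximation we may take $\partial\Sigma$ smooth inside $Q_L$. It then suffices to show
\begin{align*}
(\rho-1)\lambda(\Sigma)\le(1+\delta)RP(\Sigma,Q_L).
\end{align*}
The inequality $\lambda(\Sigma^R)-\lambda(\Sigma)\ge(\rho-1)\lambda(\Sigma)$ that follows from \eqref{m3} is useless here, since for non-convex $\Sigma$ the area enlargement can be much larger than $RP(\Sigma,Q_L)$. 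So instead of the sets $\Sigma^R$ I would work with a transport plan and its occupation measure.

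\emph{Step 1 (a well-behaved plan).} I would take $p<\infty$ large and let $T_p$ be the optimal map for the cost $|x-y|^p$ from $\lambda$ to $\rho\lambda$ (Gangbo--McCann). As $p\uparrow\infty$ one has $\sup|T_p(x)-x|\to R$ along a subsequence, by the standard $\Gamma$-limit of $W_p$ to $W_\infty$ for compactly supported measures. Set $T_t:=\mathrm{id}+t(T_p-\mathrm{id})$ and $\rho_t\lambda:=(T_t)_\#\lambda$. I want the interpolation bound
\begin{align*}
\rho_t\le\max\{1,\|\rho\|_{L^\infty}\}\le1+\delta \quad\mbox{for all } t\in[0,1].
\end{align*}
This is where McCann-type displacement convexity enters: the determinant inequality $\det DT_t^{1/2}\ge(1-t)+t\det DT_p^{1/2}$, valid for optimal maps of strictly convex costs. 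I expect this step to be the main obstacle. If this interpolation bound fails for $p\neq2$, I would fall back on the plan minimizing $\int|x-y|^2d\pi$ among plans with $\sup|x-y|\le R+\epsilon$. I would then check that it is still cyclically monotone on the relevant pieces, so that the same Jacobian concavity applies.

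\emph{Step 2 (testing with a ramp function).} For $s>0$ set $\phi_s(x):=(1-\mathrm{dist}(x,\Sigma)/s)_+$. This function equals $1$ on $\Sigma$, is $1/s$-Lipschitz, and has gradient supported in the layer $\Sigma^s\backslash\Sigma$. Then
\begin{align*}
\int\phi_s(\rho-1)d\lambda=\int\big(\phi_s(T_p x)-\phi_s(x)\big)dx=\int_0^1\!\!\int\nabla\phi_s(T_tx)\cdot(T_px-x)\,dx\,dt.
\end{align*}
By Step 1, the right-hand side is bounded by
\begin{align*}
\frac{\sup|T_p-\mathrm{id}|}{s}\int_0^1\rho_t\lambda(\Sigma^s\backslash\Sigma)dt\le\frac{\sup|T_p-\mathrm{id}|}{s}(1+\delta)\lambda(\Sigma^s\backslash\Sigma).
\end{align*}

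\emph{Step 3 (limits).} I would first let $p\uparrow\infty$, which replaces $\sup|T_p-\mathrm{id}|$ by $R$. Then I let $s\downarrow0$. On the left, $\int\phi_s(\rho-1)d\lambda\to(\rho-1)\lambda(\Sigma)$ by dominated convergence, since $\rho-1$ is bounded. On the right, $\lambda(\Sigma^s\backslash\Sigma)/s\to P(\Sigma,Q_L)$ for smooth $\partial\Sigma$, by the Minkowski-content formula already used heuristically for \eqref{e8}. This yields $(\rho-1)\lambda(\Sigma)\le(1+\delta)RP(\Sigma,Q_L)$. Taking the maximum over $\Sigma$, after approximating general finite-perimeter sets by smooth ones with convergent perimeter and volume, gives the lemma.
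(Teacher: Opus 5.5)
Your route differs from the paper's and can be made to work, but one step is misjustified. The claim that $\sup|T_p-\mathrm{id}|\to R$ along a subsequence does not follow from the $\Gamma$-convergence of $\mathrm{Wass}_p$ to $\mathrm{Wass}_\infty$. That convergence gives two things: convergence of the optimal values, $(\int|T_px-x|^p\,dx)^{1/p}\to R$, and weak convergence of the optimal plans. But the functional $\pi\mapsto\sup_{(x,y)\in\mathrm{spt}\,\pi}|x-y|$ is only lower semicontinuous under weak convergence, and the bound $\sup|T_p-\mathrm{id}|\ge R$ holds trivially anyway. Nothing in your argument prevents a small amount of mass of the $p$-optimal plan from travelling farther than $(1+\epsilon)R$.

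The repair is cheap. Do not pull out the supremum. Instead apply H\"older in $(x,t)$, and use a $\mathrm{Wass}_\infty$-optimal coupling as a competitor for the $p$-cost, which gives $\int|T_px-x|^p\,dx\le R^p\lambda(Q_L)$:
\begin{align*}
\Big|\int\phi_s(\rho-1)\,d\lambda\Big|&\le\frac1s\Big(\int_0^1\rho_t\lambda(\Sigma^s\backslash\Sigma)\,dt\Big)^{1-\frac1p}\Big(\int|T_px-x|^p\,dx\Big)^{\frac1p}\\
&\le\frac{R}{s}\big((1+\delta)\lambda(\Sigma^s\backslash\Sigma)\big)^{1-\frac1p}\lambda(Q_L)^{\frac1p}.
\end{align*}
Letting $p\uparrow\infty$ yields your Step~2 bound with $R$ in place of $\sup|T_p-\mathrm{id}|$. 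The same H\"older argument with $|\nabla\phi|^{p'}$ even gives $\int\phi(\rho-1)\,d\lambda\le(1+\delta)R\int_{Q_L}|\nabla\phi|\,dx$ for every Lipschitz $\phi$, which spares you the Minkowski-content limit.

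As for Step~1, the interpolation bound holds for all costs $|x-y|^p$, $p>1$, on the convex domain $Q_L$. The Jacobian-concavity argument is the same as for $p=2$: $DT_p$ is a product of a positive definite and a positive semidefinite symmetric matrix, so it has nonnegative real eigenvalues. You therefore do not need your fallback. The fallback would be doubtful anyway, since a plan that is optimal only under the constraint $\sup|x-y|\le R+\epsilon$ need not be globally cyclically monotone.

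For comparison, the paper stays entirely within Strassen's criterion \eqref{m3}. It defuses your (correct) objection about non-convex sets by noting that a lower bound on $\mathrm{Wass}_\infty$ needs only one set, namely the maximizer $\Sigma_*$ of the ratio. Optimality makes $\Sigma_*$ a $\kappa$-perimeter minimizer with $\kappa=\|\rho-1\|_{L^\infty}/R_*$. By Lemma~\ref{L:reg} its relative boundary then consists of $C^{1,1}$ curves of curvature $\le\kappa$. The area formula for the normal tube gives $\lambda(\Sigma_*^R)-\lambda(\Sigma_*)\le(1+\kappa R)RP(\Sigma_*,Q_L)$, and this forces $\lambda(\Sigma_*^R)<\rho\lambda(\Sigma_*)$ whenever $R<R_*(1+\|\rho-1\|_{L^\infty})^{-1}$.

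That route requires existence and regularity of the maximizer, but the same curvature bound is reused later, in the proofs of Lemma~\ref{L:4} and Lemma~\ref{L:6}. Your route needs no regularity of optimal sets. It proves the inequality for every $\Sigma$ (indeed for every Lipschitz test function) with the same constant, so it is a rigorous version of the $p\to\infty$ flux heuristic of Section~\ref{S:2}. The price is the optimal-transport machinery: Gangbo--McCann maps, $L^\infty$ bounds along $\mathrm{Wass}_p$-interpolations, and the limit $p\uparrow\infty$.
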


{\sc Proof.} The strategy consists of considering an optimal set
\begin{align}\label{m4}
\Sigma_*:=\arg\max_{\Sigma\subset Q_L}\frac{(\rho-1)\lambda(\Sigma)}{P(\Sigma,Q_L)}\quad\mbox{with ratio}\quad R_*:=\frac{(\rho-1)\lambda(\Sigma_*)}{P(\Sigma_*,Q_L)}
\end{align}
and using it as a competitor in the dual formulation \eqref{m3}. We will show below a regularity result of $\Sigma_*$, which will allow us to bound the area enlargement by the perimeter, namely for every $R\ge 0$
\begin{align}\label{m5}
\lambda(\Sigma_*^R)-\lambda(\Sigma_*)\le(1+\kappa R)RP(\Sigma_*,Q_L)\quad\mbox{with}\quad\kappa:=\|\rho-1\|_{L^\infty}/R_*.
\end{align}
This suffices to conclude: In view of the dual formulation, in order to show that $\mathrm{Wass}_\infty(\rho\lambda,\lambda)\ge R_*(1+\|\rho-1\|_{L^\infty})^{-1}$, we need to prove
\begin{align*}
\lambda(\Sigma_*^R)<\rho\lambda(\Sigma_*)\quad\mbox{if}\quad R<R_*(1+\|\rho-1\|_{L^\infty})^{-1},
\end{align*}
which follows from \eqref{m5} and the definition of $R_*$ in \eqref{m4},
\begin{align*}
\lambda(\Sigma_*^R)-\rho\lambda(\Sigma_*)&=\big(\lambda(\Sigma_*^R)-\lambda(\Sigma_*)\big)-(\rho-1)\lambda(\Sigma_*)\\
&\le(1+\kappa R)RP(\Sigma_*,Q_L)-R_*P(\Sigma_*,Q_L)\\&\le(1+\|\rho-1\|_{L^\infty}-R_*/R)RP(\Sigma_*,Q_L).
\end{align*}

\medskip

Here comes the proof of \eqref{m5}. We rephrase the optimality condition \eqref{m4} as
\begin{align*}
&0=P(\Sigma_*,Q_L)-R_*^{-1}(\rho-1)\lambda(\Sigma_*)\le P(\Sigma,Q_L)-R_*^{-1}(\rho-1)\lambda(\Sigma),
\end{align*}
which implies
\begin{align}\label{M6}
P(\Sigma_*,Q_L)\le P(\Sigma,Q_L)+\kappa\lambda(\Sigma_*\Delta\Sigma)\quad\mbox{with}\quad\kappa:=\|\rho-1\|_{L^\infty}/R_*.
\end{align}
A set $\Sigma_*$ satisfying such an inequality for all $\Sigma$ is said to be a $\kappa$-perimeter minimizer. It is a classical result (cf.~Lemma~\ref{L:reg} in the appendix) that
\begin{align}
&\mbox{the connected components of $\partial\Sigma_*\cap Q_L$ are $C^{1,1}$ curves}\nonumber\\&\mbox{with modulus of the curvature $\le\kappa$.}\label{m7}
\end{align}

\medskip

Let us now show \eqref{m5} assuming that $\partial\Sigma_*\cap Q_L$ consists of only one curve and let $\gamma(s)$ be its arc-length parameterization. The case of multiple curves is analogous. Because of \eqref{m7}, the map
\begin{align*}
f:[0,P(\Sigma_*,Q_L)]\times[0,R]\to\mathbb{R}^2\quad\mbox{given by}\quad f(s,t):=\gamma(s)+t\gamma'^{\perp}(s),
\end{align*}
is Lipschitz and its image contains $\Sigma_*^R\backslash\Sigma_*$, so that by the area formula
\begin{align}\label{a15}
\lambda(\Sigma_*^R)-\lambda(\Sigma_*)\le\int_0^Rdt\int_0^{P(\Sigma_*,Q_L)}ds|\det\nabla f(s,t)|.
\end{align}
A short computation gives
\begin{align}\label{m14}
|\det\nabla f(s,t)|=|1+t\gamma''^\perp\cdot\gamma'|\le1+\kappa R,
\end{align}
so that \eqref{m5} follows. \qed

\subsection{Gaussian approximation: KMT coupling}\label{S:kmt}
In this and the next subsection, we justify the Gaussian approximation \eqref{M35} for the discrepancy problem identified in the last subsection.  We will quantify the size of random errors in terms of Orlicz norms: For $s\in[1,\infty)$ and a random variable $X$,
\begin{align*}
\|X\|_s:=\inf\{N>0~|~\mathbb{E}\exp(\frac{|X|}{N})^s\le e\}.
\end{align*}
As a first step, we approximate $(\rho_l-1)\lambda$ by a piecewise constant Gaussian field, defined as follows: Let $\xi$ be a two-dimensional white noise and let $\xi_l\in L^\infty(Q_L)$ be its coarse-grained approximation at scale $l$,
\begin{align}\label{M25}
\xi_l:=\sum_{i=1}^{(L/l)^2}\frac{I(Q_{l,i})}{\lambda(Q_{l,i})}\int_{Q_{l,i}}\xi.
\end{align}
We observe that this is analogous to the way $\rho_l$ is obtained from $\mu$. With this at hand, we define a Gaussian field term for $\Sigma\subset\mathbb{R}^2$ by\footnote{Here $I(Q)$ denotes the indicator function of $Q$.}
\begin{align}
W_l(\Sigma):=(\xi_l\lambda)(\Sigma)-\frac{\lambda(\Sigma)}{\lambda(Q_L)}(\xi_l\lambda)(Q_L)\label{M4}.
\end{align}

The goal of this subsection is to prove the following.
\begin{lemma}\label{L:4}
There exists a coupling between $\xi$ and $\mu$ with
\begin{align*}
\big\|\max_{\Sigma\subset Q_L}\frac{(\rho_l-1)\lambda(\Sigma)}{P(\Sigma,Q_L)}-\max_{\Sigma\subset Q_L}\frac{W_l(\Sigma)}{P(\Sigma,Q_L)}\big\|_1\lesssim l^{-1}\ln L.
\end{align*}
\end{lemma}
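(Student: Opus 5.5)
The plan is to reduce the statement to a deterministic estimate on a single random field and then build a dyadic KMT (Tusnády) coupling. Since $|\max_\Sigma a(\Sigma)/P-\max_\Sigma b(\Sigma)/P|\le\max_\Sigma|a(\Sigma)-b(\Sigma)|/P(\Sigma,Q_L)$, it suffices to construct a coupling for which the signed measure
\begin{align*}
D:=(\rho_l-1)\lambda-\big(\xi_l-\tfrac{1}{\lambda(Q_L)}(\xi_l\lambda)(Q_L)\big)\lambda
\end{align*}
satisfies $\|\max_{\Sigma}|D(\Sigma)|/P(\Sigma,Q_L)\|_1\lesssim l^{-1}\ln L$. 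Both fields are constant on the cubes $Q_{l,i}$. Both also have total mass zero on $Q_L$: $\mu(Q_L)=N=\lambda(Q_L)$, and the volume compensation in \eqref{M4} kills the mean. Hence $D$ is a piecewise constant density at scale $l$ with zero mean, and only its mean-free (Haar-type) content matters.

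\textbf{Dyadic coupling.} I split $Q_L$ dyadically into rectangles of aspect ratio at most $2$, alternately halving horizontally and vertically, down to the cubes $Q_{l,i}$ (assume $L/l$ is a power of $2$). For a node $A$ with children $A_1,A_2$, the $\mu$-side is described by the conditional binomial $\mu(A_1)\mid\mu(A)\sim\mathrm{Bin}(\mu(A),1/2)$. The Gaussian side is described by $\xi(A_1)-\tfrac12\xi(A)$, which is independent of $\xi(A)$, centered, and of variance $\lambda(A)/4$. These Haar increments are independent across nodes, and the compensated field is exactly the sum of them, since the top-level mean is removed. Tusnády's lemma (the core of KMT, cf.~\cite[Chapter 10]{Pol}), applied node by node from the top down and conditionally on the already-coupled parent, couples them so that the node error
\begin{align*}
e_A:=\big(\mu(A_1)-\tfrac12\mu(A)\big)-\big(\xi(A_1)-\tfrac12\xi(A)\big)
\end{align*}
satisfies $|e_A|\lesssim 1+(\mu(A)-\lambda(A))^2/\lambda(A)$ plus a term with exponential tails. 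By Bernstein's inequality (as in Lemma~\ref{L:3}) the parent fluctuation $(\mu(A)-\lambda(A))^2/\lambda(A)$ also has exponential tails. Hence $\|e_A\|_1\lesssim1$ uniformly in $A$.

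\textbf{Deterministic geometric bound.} I then write $D=\sum_A e_A\,h_A$ with $h_A=I(A_1)/\lambda(A_1)-I(A_2)/\lambda(A_2)$, up to constant factors. For every $\Sigma$,
\begin{align*}
\Big|\int_\Sigma h_A\,d\lambda\Big|=\Big|\frac{\lambda(\Sigma\cap A_1)}{\lambda(A_1)}-\frac{\lambda(\Sigma\cap A_2)}{\lambda(A_2)}\Big|\lesssim\frac{P(\Sigma,\mathrm{int}A)}{r_A},
\end{align*}
where $r_A$ is the side length of $A$. This follows from the $BV$-Poincaré inequality $\int_A|u-\bar u|\lesssim r_A|Du|(A)$ applied to $u=I(\Sigma)$ on a rectangle of bounded aspect ratio. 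Summing first over the disjoint nodes of one level and then over levels gives
\begin{align*}
\frac{|D(\Sigma)|}{P(\Sigma,Q_L)}\lesssim\sum_{r=l,2l,\dots,L}r^{-1}\max_{A:\,r_A=r}|e_A|.
\end{align*}
At scale $r$ there are at most $(L/r)^2\le L^2$ nodes. A maximum of at most $L^2$ variables with $\|\cdot\|_1\lesssim1$ has $\|\cdot\|_1\lesssim\ln L$. The triangle inequality in $\|\cdot\|_1$ together with the geometric sum $\sum_{r\ge l}r^{-1}\lesssim l^{-1}$ then gives the claim.

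\textbf{Main obstacle.} The delicate step is the node-wise coupling. The binomial's conditional variance $\mu(A)/4$ differs from the Gaussian variance $\lambda(A)/4$, and the coupling must be carried out conditionally along the tree while keeping uniform exponential tails for $e_A$. I would take the quantitative Tusnády lemma as given: $\mathrm{Bin}(n,1/2)$ is coupled with $\tfrac n2+\tfrac{\sqrt n}{2}Z$ with error $\lesssim1+Z^2$. I would then correct the variance mismatch by the term $(\sqrt{\mu(A)}-\sqrt{\lambda(A)})|Z|$, which is $\lesssim|\mu(A)-\lambda(A)|\,|Z|/\sqrt{\lambda(A)}$ and is controlled by Bernstein as above. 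A minor point is that $\mu(A)$ can be zero or very far from $\lambda(A)$. These events have probability $\exp(-c\lambda(A))$, which is negligible after the union bound since $\lambda(A)\ge l^2\gg\ln L$.
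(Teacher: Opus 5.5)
Your route is genuinely different from the paper's. The architecture is sound, but one probabilistic estimate has to be repaired.

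\textbf{Comparison with the paper.} The paper never builds a two-dimensional coupling.
\begin{itemize}
\item It first proves the bound only for lattice animals (Lemma~\ref{L:5}). It stacks the rows of $l$-cubes into a strip via $\tau$, bounds the number of components by $P(\Sigma,Q_L)/l$ as in \eqref{a1}, and cites the one-dimensional KMT theorem for intervals.
\item It then passes to general sets only through the maximizer $\Sigma_*$. Testing with single cubes shows $\Sigma_*$ is a $\kappa$-perimeter minimizer with $\kappa\lesssim l^{-1}$. Lemma~\ref{L:reg} then gives curvature bounds, and these yield \eqref{M8} for the inner lattice-animal approximation.
\end{itemize}
You instead build a dyadic Tusnády coupling in two dimensions (essentially Massart's scheme) and pair it with a deterministic Haar/BV--Poincar\'e estimate. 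The Haar identity $D=\sum_A e_A h_A$ is exact, and the bound $|\int_\Sigma h_A\,d\lambda|\lesssim P(\Sigma,\mathrm{int}A)/r_A$ is correct, as is the level-by-level summation. This controls $|D(\Sigma)|/P(\Sigma,Q_L)$ uniformly over all finite-perimeter sets, which is stronger than what the paper proves. It removes the need for regularity of optimizers, the stacking map, and the lattice-animal approximation.

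The price is twofold. You must re-prove a KMT-type coupling node by node instead of citing it. You also need $L/l$ to be a power of $2$. This is harmless for Theorem~\ref{T:1}, since $l$ can be chosen in the window $\ln^{1/2}L\ll l\ll\ln^{3/4}L$. Otherwise you would need a binomial coupling with $p\neq\frac12$.

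\textbf{The step that fails as written.} Your control of the variance mismatch does not work.
\begin{itemize}
\item The proxy $(\mu(A)-\lambda(A))^2/\lambda(A)$ does not have $\|\cdot\|_1\lesssim1$ uniformly in $A$. Bernstein gives exponential tails only up to level $\sim\lambda(A)$.
\item Large deviations break it: on the event $\mu(A)\approx L^2$, of probability about $(\lambda(A)/L^2)^{L^2}$, the proxy is of size $L^4/\lambda(A)$. This forces its Orlicz norm to be at least of order $L^2/(\lambda(A)\ln L)$.
\item Your union-bound remark does not rescue an Orlicz-norm statement, because it says nothing about the size of the error on the exceptional event.
\end{itemize}

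\textbf{Fix.} Do not pass through $|\mu(A)-\lambda(A)|/\sqrt{\lambda(A)}$. Tusnády's lemma directly gives $|e_A|\le 1+\tfrac18 Z_A^2+\tfrac12|\sqrt{\mu(A)}-\sqrt{\lambda(A)}|\,|Z_A|$, with $Z_A$ independent of $\mu(A)$. Moreover $|\sqrt{\mu(A)}-\sqrt{\lambda(A)}|\le\min\{|\mu(A)-\lambda(A)|/\sqrt{\lambda(A)},\,|\mu(A)-\lambda(A)|^{1/2}\}$. Bernstein then shows this factor is sub-Gaussian with an absolute constant, uniformly in $A$, on both the upper and lower tails. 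Its product with the independent Gaussian $Z_A$ therefore has $\|\cdot\|_1\lesssim1$, and no exceptional event is needed. With this, $\|e_A\|_1\lesssim1$ holds and the rest of your argument (Lemma~\ref{L:11}~(iii) per level, then the geometric sum over scales) gives $l^{-1}\ln L$.
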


We note that, since both terms are expected to scale like $\ln^{3/4}L$, this is a good approximation provided that $l\gg\ln^{1/4}L$, which is a weaker assumption than the one in \eqref{M3}.

\medskip

The main tool used in the proof is a result by Koml\'os-Major-Tusn\'ady, which constructs such a coupling uniformly over a collection of rectangles. Similarly to \cite{Mas}, we will employ the one-dimensional version, which we then lift to the two-dimensional statement. For the reader's convenience, we recall the precise form of the KMT-coupling we will use.

\begin{theorem}[{Koml\'os-Major-Tusn\'ady \cite{KMT}}]\label{KMT}
Let $\xi^{1d}$ be a white noise on the interval $[0,1]$. There exists a constant $C$ such that for each $n\in\mathbb{N}$ there exist i.i.d. $Z_1,...,Z_n$ with uniform distribution on the interval $[0,1]$ satisfying %
\begin{align*}
\ln\mathbb{P}\big(\sup_{t\leq 1}\big|\mu^{1d}([0,t])-nt-\sqrt n\big(\xi^{1d}([0,t])-t\xi^{1d}([0,1])\big)\big|\geq\nu+C\ln n\big)\lesssim -\nu,
\end{align*}
with $\mu^{1d}=\sum_{i=1}^n\delta_{Z_i}$.

\end{theorem}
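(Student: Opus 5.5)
The theorem is the classical Komlós--Major--Tusnády strong approximation of the uniform empirical process by a Brownian bridge. The proof I would follow is the dyadic (Tusnády) quantile construction, as in \cite[Chapter 10]{Pol}. First I would observe that $B(t):=\xi^{1d}([0,t])-t\,\xi^{1d}([0,1])$ is a Brownian bridge, and that $F_n(t):=\mu^{1d}([0,t])-nt$ vanishes at $t=0,1$ just as $B$ does. It is convenient to first treat $n=2^m$. General $n$ is then handled by running the same construction with the total count $n$ fixed at the root, since the dyadic tree does not need $n$ to be a power of two, only a depth $m\approx\log_2 n$.

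\emph{Dyadic construction.} For each dyadic interval $J$ of length $2^{-k}$, $k\le m$, I would generate the counts $N(J)=\mu^{1d}(J)$ top-down. Given $N(J)$, the count in the left child is $\mathrm{Bin}(N(J),1/2)$. Correspondingly, given the bridge increment $B(J)$, the left-child increment equals $B(J)/2$ plus an independent $\mathcal N(0,2^{-k}/4)$ variable $\sqrt{2^{-k}/4}\,Z_J$. I would couple the two by the quantile transform: the left count is $F^{-1}_{\mathrm{Bin}(N(J),1/2)}(\Phi(Z_J))$. At the finest level I would place the $N(J)$ points i.i.d.\ uniformly in each $J$. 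This produces i.i.d.\ uniforms (by the conditional structure of multinomials) together with a Brownian bridge, since the conditional Gaussian splits reproduce its law.

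\emph{Error bounds.} The key input is Tusnády's lemma: if $X=F^{-1}_{\mathrm{Bin}(N,1/2)}(\Phi(Z))$, then $|X-N/2-\tfrac{\sqrt N}{2}Z|\le C(1+Z^2)$. Writing $D(J):=N(J)-n|J|-\sqrt n\,B(J)$, one gets a recursion for the left child $J'$:
\begin{align*}
D(J')=\tfrac12 D(J)+\tfrac12\big(\sqrt{N(J)}-\sqrt{n|J|}\big)Z_J+O(1+Z_J^2).
\end{align*}
The middle term is bounded by $|N(J)-n|J||\,|Z_J|/\sqrt{n|J|}$. Since $N(J)-n|J|$ is itself of order $\sqrt{n|J|}$ with Bernstein-type tails, this term is again of the form $O(1+Z_J^2)$ plus a multiple of $Z_J\,D$-type terms. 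The value $F_n(t)-\sqrt n B(t)$ at a dyadic point $t$ of level $m$ is a sum of the $D$'s of at most one interval per level, so it involves $m\approx\log_2 n$ terms.

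\emph{Main obstacle: exponential moments along the tree.} The hardest step is showing that this accumulated sum has exponential moments of order $n^{C}$, i.e.\ $\mathbb E\exp(\theta|F_n(t)-\sqrt nB(t)|)\le Cn^{C'}$ uniformly over dyadic $t$ for a small fixed $\theta$. The factor $\tfrac12$ in the recursion makes contributions from coarse levels decay geometrically. However, the quadratic terms $Z_J^2$ appear once per level, and the cross terms couple the levels. I would first try a bottom-up induction on conditional exponential moments: bound $\mathbb E[\exp(\theta|D(J')|)\mid\mathcal F_J]$ by $C\exp(\theta'|D(J)|)$ with $\theta'$ slightly smaller than $\theta$, using $\mathbb E e^{\theta Z^2}<\infty$ for small $\theta$ and Bernstein for the binomial counts. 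Losing a bounded factor per level over $m$ levels yields $n^{C}$. A union bound over the $n$ finest-level dyadic points, together with Markov's inequality, then gives the bound at these points with threshold $\nu+C\ln n$.

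Finally, for $t$ between dyadic points, the within-cell fluctuations need to be controlled: the maximal count among $n$ cells of mass $\approx 1$, and the oscillation of $\sqrt n B$ over intervals of length $1/n$. Both are $O(\ln n)$ with exponential tails, by a Poisson/Bernstein bound and the Gaussian maximal inequality respectively. This completes the estimate $\ln\mathbb P(\sup_t|\cdot|\ge\nu+C\ln n)\lesssim-\nu$.
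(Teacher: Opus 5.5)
The paper does not prove this statement. It quotes it from \cite{KMT}, with \cite[Chapter 10]{Pol} as exposition. Your outline (dyadic quantile coupling, Tusn\'ady's lemma, the recursion for $D$, union bound over dyadic points, interpolation) is that standard route, and those ingredients are correct. The gap is the step you call the main obstacle: the proposed bound $\mathbb{E}[\exp(\theta|D(J')|)\mid\mathcal{F}_J]\le C\exp(\theta'|D(J)|)$ is false.

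Conditionally on $\mathcal{F}_J$, $Z_J$ is a fresh standard Gaussian. So the cross term $\tfrac12(\sqrt{N(J)}-\sqrt{n|J|})Z_J$ alone produces a factor of order $\exp\big(c\theta^2(\sqrt{N(J)}-\sqrt{n|J|})^2\big)$. Since $N(J)=n|J|+\sqrt n B(J)+D(J)$, this factor depends on $B(J)$, not only on $D(J)$. It is unbounded on $\{D(J)=O(1),\ |B(J)|/\sqrt{|J|}\gg1\}$. After your bound $|N(J)-n|J||\le|D(J)|+\sqrt n|B(J)|$, its $D$-part is $\exp(c\theta^2D(J)^2/(n|J|))$, which is quadratic in $D(J)$. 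No $C\exp(\theta'|D(J)|)$ dominates either.

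Three further problems with this step:
\begin{enumerate}
\item Bernstein's inequality is an unconditional tail bound and cannot be inserted into a conditional recursion.
\item The bound $|\sqrt N-\sqrt a|\le|N-a|/\sqrt a$ is too lossy at the finest levels. There $n|J|=O(1)$, $N(J)$ has only Poisson-type tails, and $\mathbb{E}\exp(c(N(J)-n|J|)^2/(n|J|))$ is of size $e^{cn^2-O(n\ln n)}$.
\item Exponential bounds for each single $D(J)$ would not control the sum over the up to $m$ decomposition intervals of $[0,t]$ without losing a factor $m$ in $\theta$.
\end{enumerate}

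The missing idea is to control the cross terms pathwise, so that only the independent $Z_J$ along one root-to-leaf path remain.
\begin{enumerate}
\item For $t$ dyadic of level $m$, let $J_0\supset\dots\supset J_{m-1}$ be the half-open dyadic intervals of length $2^{-k}$ containing $t$. Let $e_J:=N(J')-N(J'')-\sqrt{n|J|}Z_J$ be the Haar-coefficient error at $J$.
\item Unrolling your recursion (equivalently, the Haar--Schauder expansion) gives $|F_n(t)-\sqrt nB(t)|\le\sum_{k<m}|e_{J_k}|$.
\item Tusn\'ady's lemma and Young's inequality give $|e_J|\lesssim1+Z_J^2+Y_J^2$, where $Y_J:=|\sqrt{N(J)}-\sqrt{n|J|}|$.
\item Tusn\'ady's lemma together with the trivial bound $|N(J')-\tfrac12N(J)|\le\tfrac12N(J)$ yields $|\sqrt{N(J')}-\sqrt{N(J)/2}|\lesssim1+|Z_J|$. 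To see this, distinguish $Z_J^2\le N(J)$ from $Z_J^2>N(J)$; the same holds for $J''$. This gives the deterministic contraction
\begin{align*}
Y_{J_{k+1}}\le 2^{-1/2}Y_{J_k}+C\big(1+|Z_{J_k}|\big),\qquad Y_{J_0}=0.
\end{align*}
\item Unrolling this and applying Cauchy--Schwarz with geometric weights gives $\sum_{k<m}Y_{J_k}^2\lesssim\sum_{k<m}(1+Z_{J_k}^2)$.
\item Hence $|F_n(t)-\sqrt nB(t)|\le C\sum_{k<m}(1+Z_{J_k}^2)$, with $Z_{J_0},\dots,Z_{J_{m-1}}$ i.i.d.\ standard Gaussians.
\end{enumerate}
Therefore $\mathbb{E}\exp(\theta|F_n(t)-\sqrt nB(t)|)\le K^m\le n^{C'}$ for a small universal $\theta$. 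Your union bound and interpolation then finish the proof.
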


\begin{remark}\label{rk:5}
In the original work, the statement is written in an integrated form with a Brownian bridge $(B(t))_{t\in[0,1]}=(\xi^{1d}([0,t])-t\xi^{1d}([0,1]))_{t\in[0,1]}$ and empirical distribution function $F_n(t)=\frac 1n\sum_{i=1}^nI(Z_i\leq t)=\frac1n\mu^{1d}([0,t])$:
\begin{align*}
\ln\mathbb{P}(\sup_{t\leq 1}|n(F_n(t)-t)-\sqrt{n}B(t)|\ge\nu+C\ln n)\lesssim -\nu.
\end{align*}
\end{remark}
A simple post-processing of the KMT theorem is the following corollary, the proof of which can be found in the appendix.

\begin{corollary}\label{L:7}
    Let $T>0$ and $\xi^{1d}$ a white noise on the interval $[0,T]$. For each $n\in\mathbb{N}$ there exists a binomial process $\mu^{1d}=\sum_{i=1}^n\delta_{Z_i}$ such that  
    \begin{align*}
        &\ln\mathbb P\big(\sup_{I\subset[0,T]}|\mu^{1d}(I)-\frac{n}{T}|I|-\sqrt{\frac nT}\big(\xi^{1d}(I)-\frac{|I|}{T}\xi^{1d}([0,T])\big)|\ge\nu+C\ln n\big)\\&\lesssim-\nu,
    \end{align*}
    where the supremum is taken over all intervals $I$.
\end{corollary}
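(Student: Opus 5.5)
We rescale Theorem~\ref{KMT} from $[0,1]$ to $[0,T]$ and pass from initial segments $[0,t]$ to arbitrary intervals. Given a white noise $\xi^{1d}$ on $[0,T]$, we define a white noise $\tilde\xi$ on $[0,1]$ by $\tilde\xi([0,s]):=T^{-1/2}\xi^{1d}([0,sT])$. This is again a white noise, since its covariance is $T^{-1}\cdot|sT\wedge s'T|=s\wedge s'$. Theorem~\ref{KMT} applied to $\tilde\xi$ yields i.i.d.\ uniform $\tilde Z_1,\dots,\tilde Z_n$ on $[0,1]$ with the stated tail bound. We then set $Z_i:=T\tilde Z_i$, which are i.i.d.\ uniform on $[0,T]$, so that $\mu^{1d}=\sum_i\delta_{Z_i}$ is a binomial process on $[0,T]$.

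Next we check that the scaling works out. For $t\in[0,T]$ we have $\mu^{1d}([0,t])=\tilde\mu([0,t/T])$, and the deterministic term becomes $n\,t/T$. The Gaussian term transforms as
\begin{align*}
\sqrt n\big(\tilde\xi([0,t/T])-\tfrac tT\tilde\xi([0,1])\big)=\sqrt{\tfrac nT}\big(\xi^{1d}([0,t])-\tfrac tT\xi^{1d}([0,T])\big).
\end{align*}
Define the error process
\begin{align*}
E(t):=\mu^{1d}([0,t])-\tfrac nT t-\sqrt{\tfrac nT}\big(\xi^{1d}([0,t])-\tfrac tT\xi^{1d}([0,T])\big).
\end{align*}
Then $\sup_{t\le T}|E(t)|$ has exactly the law of the supremum in Theorem~\ref{KMT}, and hence obeys the same tail bound.

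To pass to intervals, note that every expression inside the supremum is additive in $I$. For an interval $I$ with endpoints $a\le b$, the quantity is therefore $E(b)-E(a)$, up to the contribution of atoms at the endpoints. Open versus closed endpoints make a difference only through $\mu^{1d}$, since $\xi^{1d}$ and Lebesgue measure have no atoms. Almost surely the $Z_i$ are distinct, so this boundary correction is at most $1$ per endpoint; one can also handle it by approximating $E$ by one-sided limits, which are controlled by the same supremum. Consequently
\begin{align*}
\sup_I|\ldots|\le2\sup_{t\le T}|E(t)|+2,
\end{align*}
and we also use $\sup_t|E(t^-)|\le\sup_t|E(t)|+1$.

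It remains to adjust the tail bound. We have
\begin{align*}
\mathbb P\big(\sup_I|\ldots|\ge\nu+C'\ln n\big)\le\mathbb P\big(\sup_t|E|\ge\tfrac{\nu}{2}+\tfrac{C'}{2}\ln n-1\big).
\end{align*}
Choosing $C'\ge2C$ absorbs the constant $-1$ into $\nu$ for $\nu\gg1$. Theorem~\ref{KMT} then gives the bound $\lesssim-\nu/2$, which is $\lesssim-\nu$ after adjusting the implicit constant. For bounded $\nu$ the claim is trivial after enlarging constants. The only mild subtlety is the endpoint/atom bookkeeping; the rest is routine rescaling.
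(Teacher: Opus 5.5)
Your proposal is correct and follows the paper's proof essentially verbatim: rescale the white noise to $[0,1]$, apply Theorem~\ref{KMT}, dilate via $Z_i=T\hat Z_i$, check the scaling identity for initial segments $[0,t]$, and pass to general intervals by writing the quantity as $E(b)-E(a)$ and using the triangle inequality. Your extra bookkeeping for endpoint atoms (almost surely at most one point per endpoint) and for the constants in the tail bound supplies details that the paper covers only with the phrase ``by triangle inequality''.
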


In order to make use of the KMT-coupling in Lemma \ref{L:4}, we need to pass from one dimensional to two dimensional sets first. We start by extending the coupling to a specific class of sets $\Sigma\subset Q_L$ that are unions of some of the squares $\{Q_{l,i}\}_i$. Such sets are called \emph{lattice animals}, and we denote their family by $\mathcal{A}_l$. The result in Lemma~\ref{L:4} for all sets will be recovered from the following by approximation.

\medskip

\begin{lemma}\label{L:5}
There exists a coupling between $\xi$ and $\mu$ such that
\begin{align*}
\|\max_{\Sigma\in\mathcal{A}_l}\frac{|(\rho_l-1)\lambda(\Sigma)-W_l(\Sigma)|}{P(\Sigma,Q_L)}\|_1\lesssim l^{-1}\ln L.
\end{align*}
\end{lemma}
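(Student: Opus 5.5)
We build the coupling hierarchically and then decompose each lattice animal into rows. Write $\mu(Q_{l,i})=:M_i$ and $g_i:=\int_{Q_{l,i}}\xi$; on a lattice animal $\Sigma\in\mathcal{A}_l$ we have
\begin{align*}
(\rho_l-1)\lambda(\Sigma)=\sum_{Q_{l,i}\subset\Sigma}(M_i-l^2)\quad\mbox{and}\quad W_l(\Sigma)=\sum_{Q_{l,i}\subset\Sigma}g_i-\frac{\lambda(\Sigma)}{L^2}\sum_i g_i.
\end{align*}
So only the coupling of the $(L/l)^2$-dimensional vectors $(M_i)$ and $(g_i)$ matters. Following \cite{Mas}, we organize the squares into $L/l$ rows (horizontal strips $S_j$ of height $l$).

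\emph{Step 1: coupling the row counts.} The vector $(\mu(S_j))_j$ of row counts is the discretization of the one-dimensional binomial process of the $y$-coordinates of the $X_i$, which has $n=L^2$ points on $[0,L]$. We apply Corollary~\ref{L:7} with $T=L$ to the $y$-marginal of $\xi$, i.e. the one-dimensional white noise $\xi^{1d}(I):=\xi([0,L]\times I)/\sqrt L$ (this is a white noise since the strips have width $L$). The prefactor is $\sqrt{n/T}\,\xi^{1d}(I)=\xi([0,L]\times I)$. This gives, uniformly over all unions of consecutive rows,
\begin{align*}
\Big|\mu(\textstyle\bigcup_{j\in J}S_j)-\lambda(\bigcup_{j\in J}S_j)-W_l(\bigcup_{j\in J}S_j)\Big|\le\nu+C\ln L
\end{align*}
outside an event of probability $e^{-c\nu}$.

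\emph{Step 2: coupling within each row.} Conditionally on $\mu(S_j)=n_j$, the $x$-coordinates of the points in $S_j$ are $n_j$ i.i.d. uniforms on $[0,L]$, independent across rows. We apply Corollary~\ref{L:7} again in each row with $n=n_j$, coupled to the $x$-profile of $\xi$ restricted to $S_j$. Here there is a normalization mismatch: the corollary produces $\sqrt{n_j/L}$ times a white noise with the row's mean removed, whereas $\xi\llcorner S_j$ carries variance $l$ per unit length. The discrepancy $(\sqrt{n_j}-\sqrt{lL})$ is of order $l^{1/2}\ln^{1/2}L/L^{1/2}$ times a fluctuation, which is controlled by Lemma~\ref{L:3}-type Bernstein bounds. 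The bridge correction $-\frac{|I|}{L}\xi(S_j)$ combines with the row-level term from Step 1 to reproduce the global compensation in \eqref{M4}. The outcome is that for every interval $I$ of squares in row $j$, the error for the rectangle $I\times S_j$ is $\lesssim\nu+\ln L$, uniformly in $j$ and $I$ by a union bound over $L/l$ rows, with $\ln(L/l)$ absorbed.

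\emph{Step 3: decomposition of lattice animals.} Each $\Sigma\in\mathcal{A}_l$ intersected with a row is a union of disjoint intervals of squares. The number of intervals in row $j$ is at most half the number of vertical boundary edges of $\Sigma$ in that row, so the total number of rectangles $K(\Sigma)$ satisfies $K(\Sigma)\lesssim P(\Sigma,Q_L)/l$, up to the boundary of $Q_L$. The relative perimeter could vanish at $\partial Q_L$ only for full rows; those are handled by Step 1, which we apply to maximal runs of full rows, again at most $P/l$ of them. Summing the rectangle errors gives
\begin{align*}
|(\rho_l-1)\lambda(\Sigma)-W_l(\Sigma)|\le(\nu+C\ln L)\,\frac{C\,P(\Sigma,Q_L)}{l}
\end{align*}
simultaneously for all $\Sigma$. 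Dividing by $P$ and taking the maximum bounds the maximum by $l^{-1}(\nu+C\ln L)$ with tail $e^{-c\nu}$, which yields the Orlicz $\|\cdot\|_1$ bound $\lesssim l^{-1}\ln L$.

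\emph{Main obstacle.} The hardest step is Step 2: matching the random number of points $n_j$ in a row to a fixed-variance Gaussian, and checking that the centering terms from the two levels combine exactly into $W_l$. If the variance mismatch turns out to be awkward, we would first try to Poissonize, or else absorb it as an extra error $|\sqrt{n_j}-\sqrt{lL}|\,|B|\lesssim \ln L$ with high probability, which is again linear in the number of rectangles.
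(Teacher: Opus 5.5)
Your argument is correct, but it takes a different route from the paper.

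\textbf{The paper's route.} The paper needs no two-level construction. It uses the measure-preserving stacking map $\tau:[0,L^2/l]\times[0,l]\to Q_L$, which concatenates the $L/l$ rows into one long strip. It then applies Corollary~\ref{L:7} once, with $n=L^2$ and $T=L^2/l$, to $\sqrt{l}\,\xi^{1d}:=\xi\circ\tau(\cdot\times[0,l])$, and lifts the one-dimensional binomial process to $Q_L$ by attaching independent uniform heights. In that picture the bridge term $\frac{l|I|}{L^2}\xi^{1d}([0,L^2/l])$ is already the global compensation in \eqref{M4}. There are no random row counts, hence no $\sqrt{n_j}$ versus $\sqrt{lL}$ mismatch and no two-level centering. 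A lattice animal is a union of intervals of the long line, possibly wrapping from one row into the next, which also absorbs your blocks of full rows. The number of these intervals is at most $P(\Sigma,Q_L)/l$ by \eqref{a1}.

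\textbf{Your route.} Your two-level scheme (rows first, then within rows) keeps the pieces as genuine rectangles $I\times S_j$. The centering does close up exactly: the error on $I\times S_j$ equals the row-$j$ KMT error, plus $(\sqrt{n_j/(lL)}-1)$ times the row bridge, plus $\frac{|I|}{L}$ times the Step~1 error on $S_j$. Your piece count $\lesssim P/l$, including the full-row blocks, is also right.

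Two points you leave implicit must be stated.

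\emph{Legitimacy of Step~2.} The row bridges $B_j(I):=\xi(I\times S_j)-\frac{|I|}{L}\xi(S_j)$ have zero covariance with every $\xi([0,L]\times I')$. Being Gaussian, they are therefore independent of the whole $y$-marginal, and hence of the $y$-coordinates and the counts $n_j$ built in Step~1. You must feed KMT the bridge $B_j$, not the full row profile, whose mean $\xi(S_j)$ is correlated with $n_j$. Only then do you get, conditionally on $n_j$, i.i.d.\ uniform $x$-coordinates, and thus the correct law of $\mu$.

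\emph{The mismatch term.} It equals $(\sqrt{n_j}-\sqrt{lL})\,B_j(I)/\sqrt{lL}$. Bernstein's inequality, applied to the square root, shows that $\sqrt{n_j}-\sqrt{lL}$ is sub-Gaussian with an $O(1)$ constant. Also $\sup_I|B_j(I)|/\sqrt{lL}$ is sub-Gaussian. Hence the product has $e^{-ct}$ tails at every level, and its maximum over the $L/l$ rows is $\lesssim\nu+\ln L$ off an event of probability $e^{-c\nu}$. The size you quote for $\sqrt{n_j}-\sqrt{lL}$ is off; it is $O(\ln^{1/2}L)$. Your conclusion that the mismatch contributes $O(\ln L)$ is nevertheless correct.
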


{\sc Proof of Lemma \ref{L:5}.} 
The proof consists of two steps: We first map each lattice animal to a union of one-dimensional sets contained in the interval $[0,L^2/l]$. By applying the rescaled version of the KMT-coupling (Corollary \ref{L:7}) we then deduce that there exist a point process and a Gaussian process which are close in a quantitative sense. Inverting the map from above will then allow us to deduce the statement.
\medskip

We begin by introducing the (up to a set of measure zero) bijective map $\tau:[0,L^2/l]\times[0,l]\to \overline{Q_L}$,
\begin{align}\label{a2}
\tau(x,y):=(x-L\bigl\lfloor\frac{x}{L}\bigr\rfloor,y+l\bigl\lfloor\frac xL\bigr\rfloor).
\end{align}
Informally, $\tau$ stacks the $L/l$ lexicographically ordered rectangles of the form $L[j,j+1]\times[0,l]$ on top of each other. One particular feature is that for any $\Sigma\in\mathcal{A}_l$, $\overline{\tau^{-1}(\Sigma)}$ is a union of disjoint rectangles of the form $I\times[0,l]$ with $I\subset [0,L^2/l]$ being a closed interval. Such rectangles will be called components of $\Sigma$ and we will call this collection $\mathrm{Comp}(\Sigma)$. 

\medskip

With this convention, we claim\footnote{Here $|\cdot|$ denotes the cardinality of a set.} for any $\Sigma\neq Q_L$,
\begin{align}\label{a1}
    P(\Sigma,Q_L)\geq l|\mathrm{Comp}(\Sigma)|.
\end{align}
Indeed, let $j,k\in\mathbb{Z}$ and $C=[jl,kl]\times[0,l]\subset[0,\frac{L^2}l]\times[0,l]$ be a component of $\Sigma$ and introduce the cube to left of $C$, $Q_{\text{left}}:=[(j-1)l,jl]\times[0,l]$, and to the right $Q_{\text{right}}:=[kl,(k+1)l]\times[0,l]$. Since $\Sigma\neq Q_L$, at least one of these has to be contained in the strip $[0,\frac{L^2}l]\times[0,l]$. Moreover, by definition of components, both cubes $\tau(Q_{\text{left}})$ and $\tau(Q_{\text{right}})$ are not contained in $\Sigma$. The claim will now follow from the fact that at least one of them shares an edge with $\Sigma$, that is not contained in $\partial Q_L$, thus contributing length $l$ to the relative perimeter $P(\Sigma,Q_L)$. To see that one of the cubes is adjacent to $\Sigma$, we distinguish between two cases:
\begin{enumerate}
    \item The component is actually of the form $C=[jL,kL]\times[0,l]$, i.e. the vertical part of the boundary of $\tau(C)$ belongs to $\partial Q_L$. Choosing out of $Q_{\text{left}}$ and $Q_{\text{right}}$ the cube contained in the strip, and calling it $Q$, we realize that $\tau(Q)$ shares a horizontal edge with $\tau(C)\subset\Sigma$.
    \item If $C$ is not of the form above, then either $\tau(Q_\text{left})$ or $\tau(Q_\text{right})$ share a vertical edge with $\tau(C)\subset\Sigma$.
\end{enumerate}

\medskip

The main advantage of \eqref{a1} is that it allows us to control the size of the class of sets we are approximating over. Decomposing the lattice animal $\Sigma$ into its components,
\begin{align}
\frac{|(\rho_l-1)\lambda(\Sigma)-W_l(\Sigma)|}{P(\Sigma,Q_L)}&\leq \sum_{C\in\mathrm{Comp}(\Sigma)}\frac{|(\rho_l-1)\lambda(\tau(C))-W_l(\tau(C))|}{P(\Sigma,Q_L)}\nonumber\\
\overset{\eqref{a1}}&{\leq} \sup_{C\in\mathrm{Comp}(\Sigma)}\frac{|(\rho_l-1)\lambda(\tau(C))-W_l(\tau(C))|}{l}\nonumber\\
&\leq \sup_{I\subset[0,\frac{L^2}{l}]}\frac{|(\rho_l-1)\lambda(\tau(I\times[0,l]))-W_l(\tau(I\times[0,l]))|}{l}.\label{a4}
\end{align}
We can interpret $\mathrm{Comp}(\Sigma)$ as a collection of closed intervals contained in $[0,L^2/l]$, by applying the projection onto the first coordinate $(x,y)\mapsto x$, and therefore we are left with approximating the point process by a white noise over a one-dimensional, low-complexity index set.

\medskip

Starting from $\xi$, since $\tau$ is measure-preserving, we have that
\begin{align}\label{a8}
\sqrt{l}\,\xi^{1d}(\cdot):=\xi\circ\tau(\cdot\times[0,l])\quad\mbox{is a white noise on $[0,L^2/l]$.}
\end{align}
Appealing to Corollary \ref{L:7} with $n=L^2$ and $T=\frac{L^2}{l}$, we see that there exists a one-dimensional binomial process 
\begin{align*}
\mu^{1d}=\sum_{i=1}^{L^2}\delta_{Z_i}, \quad Z_1,...,Z_{L^2} \text{  i.~i.~d.~ uniform on  }[0,L^2/l], 
\end{align*}
such that, by Lemma \ref{L:11},
\begin{align}\label{a5}
\big\|\sup_{I\subset[0,L^2/l]} |\mu^{1d}(I)-l|I|-\sqrt l\big(\xi^{1d}(I)-\frac{l|I|}{L^2}\xi^{1d}([0,\frac{L^2}{l}])\big)|\big\|_1\lesssim \ln L.
\end{align}

Using again that $\tau$ is measure-preserving, we can construct $\mu$ such that almost surely,
\begin{align}\label{a7}
\mu^{1d}(I)=\mu\circ\tau(I\times[0,l])\quad\mbox{for every $I$}, 
\end{align}
by independently assigning each point a random height $h_i$, which is uniformly distributed on $[0,l]$. We can then recover $\mu$ on $Q_L$ by applying $\tau^{-1}$:
\begin{align*}
    \mu=\sum_{i=1}^n\delta_{\tau(Z_i,h_i)}.
\end{align*}
\medskip

Let us now conclude. Since the r.~h.~s.~ in \eqref{a4} is independent of $\Sigma$, writing it in terms of the one-dimensional objects $\mu^{1d}$ and $\xi^{1d}$ yields
\begin{align*}
&\max_{\Sigma\in\mathcal{A}_l}\frac{|(\rho_l-1)\lambda(\Sigma)-W_l(\Sigma)|}{P(\Sigma,Q_L)}\\&\leq \sup_{I\subset \big[0,\frac{L^2}l\big]}\frac{|\mu^{1d}(I)-l|I|-\sqrt{l}\big(\xi^{1d}(I)-\frac{l|I|}{L^2}\xi^{1d}([0,\frac{L^2}{l}])\big)|}{l}.
\end{align*}
Using \eqref{a5}, we conclude.
\qed

\medskip

{\sc Proof of Lemma~\ref{L:4}.} The goal is to show that
\begin{align}\label{M12}
\max_{\Sigma\subset Q_L}\frac{(\rho_l-1)\lambda(\Sigma)}{P(\Sigma,Q_L)}-\max_{\Sigma\subset Q_L}\frac{W_l(\Sigma)}{P(\Sigma,Q_L)}\lesssim\max_{\Sigma\in\mathcal{A}_l}\frac{|(\rho_l-1)\lambda(\Sigma)-W_l(\Sigma)|}{P(\Sigma,Q_L)}.
\end{align}
Since an analogous estimate holds after exchanging the two terms on the l.~h.~s., the proof follows by Lemma \ref{L:5}.

\medskip

Let $\Sigma_*$ be the optimal set for the first maximization problem in \eqref{M12}. Using it as a competitor for the Gaussian problem, we have that the l.~h.~s.~of \eqref{M12} is bounded by
\begin{align*}
\frac{|(\rho_l-1)\lambda(\Sigma_*)-W_l(\Sigma_*)|}{P(\Sigma_*,Q_L)},
\end{align*}
so that it remains to show that the above is estimated by the r.~h.~s.~of \eqref{M12}. 

\medskip

For every set $\Sigma$, we introduce the inner approximation by lattice animals
\begin{align}\label{M16}
\mbox{$\mathring{\Sigma}:=$ union of cubes $Q_{l,i}$ such that $Q_{l,i}\subset\Sigma$.}
\end{align}
We will prove in the sequel that $\Sigma_*$ is regular enough to satisfy
\begin{align}\label{M8}
&P(\mathring{\Sigma}_*,Q_L)\lesssim P(\Sigma_*,Q_L)\quad\mbox{and}\quad |\mathcal{I}|\lesssim P(\Sigma_*,Q_L)/l\nonumber\\
&\mbox{where}\quad\mathcal{I}:=\{i~|~Q_{l,i}\cap\Sigma_*\neq\emptyset~\mbox{and}~Q_{l,i}\not\subset\Sigma_*\}.
\end{align}

\medskip

With this at hand, we can conclude. We apply the triangle inequality,
\begin{align}
|(\rho_l-1)\lambda(\Sigma_*)-W_l(\Sigma_*)|&\le|(\rho_l-1)\lambda(\mathring{\Sigma}_*)-W_l(\mathring{\Sigma}_*)|\nonumber\\
&+\sum_{i\in\mathcal{I}}|(\rho_l-1)\lambda(\Sigma_*\cap Q_{l,i})-W_l(\Sigma_*\cap Q_{l,i})|.\label{M9}
\end{align}
Introducing $\mathcal{C}$ as the r.~h.~s.~of \eqref{M12}, we have
\begin{align}
&|(\rho_l-1)\lambda(\mathring{\Sigma}_*)-W_l(\mathring{\Sigma}_*)|\lesssim\mathcal{C}P(\mathring{\Sigma}_*,Q_L)\lesssim\mathcal{C}P(\Sigma_*,Q_L)\label{M13}
\end{align}
where the last inequality follows from \eqref{M8}. With respect to the second term in \eqref{M9}, since $(\rho_l-1)\lambda-W_l$ has constant density inside each cube $Q_{l,i}$, 
\begin{align*}
|(\rho_l-1)\lambda(\Sigma_*\cap Q_{l,i})-W_l(\Sigma_*\cap Q_{l,i})|\le|(\rho_l-1)\lambda(Q_{l,i})-W_l(Q_{l,i})|.
\end{align*}
Moreover, using that $Q_{l,i}\in\mathcal{A}_l$ and the second item in \eqref{M8},
\begin{align}
\sum_{i\in\mathcal{I}}|(\rho_l-1)\lambda(Q_{l,i})-W_l(Q_{l,i})|\lesssim\mathcal{C}l|\mathcal{I}|\lesssim\mathcal{C}P(\Sigma_*,Q_L).\label{M14}
\end{align}
Plugging the estimates \eqref{M13} and \eqref{M14} into \eqref{M9}, we deduce \eqref{M12}.

\medskip

Here comes the argument for \eqref{M8}. Let us first prove that
\begin{align}\label{M7}
\mbox{$\partial\Sigma_*\cap Q_L$ is the union of $C^{1,1}$ curves with curvature $\lesssim l^{-1}$.}
\end{align}
Indeed, from \eqref{M6} and \eqref{m7}, we already know that the curvature is bounded by $\|\rho_l-1\|_{L^\infty}/R_*$, hence it is enough to show that $R_*\gtrsim l\|\rho_l-1\|_{L^\infty}$. This follows directly from using the squares $Q_{l,i}$ and their complements $Q_L\backslash Q_{l,i}$ as competitors. Indeed, since $\rho_l-1$ is constant over every $Q_{l,i}$,
\begin{align*}
R_*\ge&\max\big\{\max_i\frac{(\rho_l-1)\lambda(Q_{l,i})}{P(Q_{l,i},Q_L)},\max_i\frac{(\rho_l-1)\lambda(Q_L\backslash Q_{l,i})}{P(Q_L\backslash Q_{l,i},Q_L)}\big\}\\
=&\max\big\{\max_i\frac{(\rho_l-1)\lambda(Q_{l,i})}{P(Q_{l,i},Q_L)},\max_i-\frac{(\rho_l-1)\lambda(Q_{l,i})}{P(Q_{l,i},Q_L)}\big\}\sim l\|\rho_l-1\|_{L^\infty}.
\end{align*}

\medskip

Let us finally show that the regularity in \eqref{M7} implies \eqref{M8}. Note that
\begin{align*}
P(\mathring{\Sigma}_*,Q_L)/l\lesssim|\{i~|~Q_{l,i}~\mbox{is adjacent to}~\bar\Sigma_*\}|=:|\mathcal{J}|.
\end{align*}
We observe that if $i\in\mathcal{J}\cup\mathcal{I}$, then $Q_{l,i}$ is contained in the $\sqrt{2}l$-neighborhood of the boundary $\partial\Sigma_*$, which can be written as
\begin{align*}
(\partial\Sigma_*)^{\sqrt{2}l}=\big(\Sigma_*^{\sqrt{2}l}\backslash\Sigma_*\big)\cup\big((\Sigma_*^c)^{\sqrt{2}l}\backslash\Sigma_*^c\big)\quad\mbox{with}\quad\Sigma^c_*:=Q_L\backslash\Sigma_*.
\end{align*}
Therefore, applying the estimate on the area enlargement in \eqref{m5},
\begin{align*}
|\mathcal{J}\cup\mathcal{I}|\le\lambda\big((\partial\Sigma_*)^{\sqrt{2}l}\big)/l^2\lesssim(1+\kappa l) P(\Sigma_*,Q_L)/l.
\end{align*}
The claim \eqref{M8} then follows by using the bound on the curvature in \eqref{M7}.
\qed

\medskip

\subsection{Change of small-scale cutoff}\label{SS:smallscale}
We are left with the study of 
\begin{align*}
\max_{\Sigma\subset Q_L}\frac{W_l(\Sigma)}{P(\Sigma,Q_L)}.
\end{align*}
We note that by the scaling $x=l\hat x$, we can assume $l=1$. We still keep $L$ as the macroscopic scale instead of $L/l$, since the difference is negligible on the logarithmic scale
\begin{align*}
\ln^{3/4}(L/l)\overset{\eqref{M1}}{=}\ln^{3/4}L+O(\ln\ln L/\ln^{1/4}L).
\end{align*}
Therefore, by Theorem \ref{T:4}, it suffices to show 
\begin{align*}
\max_{\Sigma\subset Q_L}\frac{W_1(\Sigma)}{P(\Sigma,Q_L)}\approx \max_{\Sigma\in\mathcal{P}_{L,1}}\frac{W(\Sigma)}{P(\Sigma,Q_L)},
\end{align*}
where $W(\Sigma)$ and $\mathcal{P}_{L,1}$ are defined as in \eqref{e1} and \eqref{M24}, respectively. We observe that this approximation amounts to moving the small-scale cutoff from the field term $W_1$ to the configuration space $\mathcal{P}_{L,1}$.

\medskip

We recall that the field terms $W$ and $W_1$ are naturally coupled through \eqref{M25} and satisfy the property
\begin{align}\label{e17}
W_1(\Sigma)=W(\Sigma)~\mbox{for every $\Sigma\in\mathcal{A}_1$},
\end{align}
where $\mathcal{A}_1$ is the class of lattice animals on the grid of spacing 1. The following result shows that the asymptotic behavior of the two problems is the same.

\begin{lemma}\label{L:6}
There exists a universal positive radius $r$ such that 
\begin{align}\label{e10}
\|\max_{\Sigma\subset Q_L}\frac{W_1(\Sigma)}{P(\Sigma,Q_L)}-\max_{\Sigma\in\mathcal{P}_{L,r}}\frac{W(\Sigma)}{P(\Sigma,Q_L)}\|_2\lesssim\ln^{1/2}L.
\end{align}
\end{lemma}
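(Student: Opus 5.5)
We prove the two one-sided bounds separately, each time taking the optimizer of one problem, turning it into a competitor for the other, and paying an error of order $\ln^{1/2}L$ in $\|\cdot\|_2$. The link between the two field terms is \eqref{e17}: $W_1$ and $W$ agree on lattice animals of the unit grid. So in both directions we pass through $\mathcal{A}_1$, or through sets differing from a lattice animal only in boundary cells. The resulting errors are controlled by a single random quantity,
\begin{align*}
\mathcal{E}:=\max_{\Sigma}\frac{\sum_{i:\,Q_{1,i}\cap\partial\Sigma\neq\emptyset}\big(|\xi(Q_{1,i})|+\sup_{S\subset Q_{1,i}}|\xi(S)|\big)}{\#\{i:\,Q_{1,i}\cap\partial\Sigma\neq\emptyset\}}
\end{align*}
(plus the volume-compensation term, which is harmless), where $S$ ranges over the relevant subsets: polygons with edges of length $\ge r$ intersected with a unit cube. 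The key probabilistic claim is that the local suprema $\sup_{S\subset Q_{1,i}}|\xi(S)|$ are sub-Gaussian with universal constant, since they are suprema over a class of bounded complexity. A union bound over the $L^2$ cells then gives $\|\max_i\sup_S|\xi(S)|\|_2\lesssim\ln^{1/2}L$. This is exactly the claimed order, so it suffices to bound everything crudely by a maximum over cells rather than an average.

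\textbf{Upper bound for the $W_1$-problem.} Let $\Sigma_*$ maximize $W_1/P$. Exactly as in the proof of Lemma~\ref{L:4}, via \eqref{M6}, \eqref{m7} and \eqref{M7} with $l=1$, $\partial\Sigma_*$ is $C^{1,1}$ with curvature $\lesssim1$. Consequently \eqref{M8} holds: $P(\mathring\Sigma_*,Q_L)\lesssim P(\Sigma_*,Q_L)$, and the number of boundary cells is $\lesssim P(\Sigma_*,Q_L)$. We use $\mathring\Sigma_*\in\mathcal{A}_1$ as a competitor. It lies in $\mathcal{P}_{L,r}$ for any $r\le1$, since its edges are unions of unit segments. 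Then
\begin{align*}
W_1(\Sigma_*)\le W(\mathring\Sigma_*)+\sum_{i\in\mathcal{I}}|\xi(Q_{1,i})|+\mbox{compensation},
\end{align*}
where $W_1(\mathring\Sigma_*)=W(\mathring\Sigma_*)$ by \eqref{e17}. The ratio therefore changes by at most $\lesssim\max_i|\xi(Q_{1,i})|$. The one subtlety is the denominator, which changes by a constant factor $P(\mathring\Sigma_*)\lesssim P(\Sigma_*)$ rather than by $1+o(1)$. To handle this, I would rather compare via the additive form $W-R P$ and use the curvature bound to show $P(\mathring\Sigma_*)\le P(\Sigma_*)+C\,\#\mathcal{I}$. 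If this is unpleasant, replace $\mathring\Sigma_*$ by a polygon inscribed in $\partial\Sigma_*$ with vertices spaced at distance $\sim1$; its perimeter is $\le P(\Sigma_*)$ and its edges have length $\ge r$. We then pay only $\sup_S|\xi(S)|$ over the cells meeting the boundary, and $W_1$ versus $W$ differ only on those cells.

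\textbf{Reverse bound.} Let $\Sigma^\#\in\mathcal{P}_{L,r}$ maximize $W/P$ over that class. Since $W_1=W$ on lattice animals, we compare $W(\Sigma^\#)$ with $W_1(\Sigma^\#)$. They differ only through cells meeting $\partial\Sigma^\#$, each contributing at most $|\xi(Q_{1,i})|+|\xi(\Sigma^\#\cap Q_{1,i})|$. The point of the radius $r$ is that a polygon with edges $\ge r$ meets at most $\lesssim P(\Sigma^\#,Q_L)/r+\#\{\mbox{edges}\}\lesssim P/r$ unit cells. Moreover $\Sigma^\#\cap Q_{1,i}$ belongs to a class of polygons with boundedly many edges, since edges have length $\ge r$ and the cell has diameter $\sqrt2$; this is the class on which the sub-Gaussian local supremum bound applies. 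Hence $W_1(\Sigma^\#)/P\ge W(\Sigma^\#)/P-C r^{-1}\max_i\sup_S|\xi(S)|$, with $r$ a fixed universal constant.

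\textbf{Main obstacle.} The step I expect to be hardest is the local Gaussian estimate: showing that $\sup\{|\xi(S)|: S\subset Q_{1,i}\mbox{ polygon with }\lesssim r^{-1}\mbox{ edges}\}$ has a Gaussian tail. I would prove it by Dudley chaining on the vertex coordinates. The map from vertices to $S$ is Lipschitz into $L^2$ with respect to the symmetric-difference metric, and $\lambda(S\Delta S')^{1/2}$ is Hölder-$1/2$ in the vertices. The entropy integral is therefore finite in the finite-dimensional parameter space, and Borell–TIS gives the tail. The union bound over $L^2$ cells then produces the $\ln^{1/2}L$ in $\|\cdot\|_2$.
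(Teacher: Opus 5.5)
Your reverse bound has a genuine gap. You assert that $\Sigma^\#\cap Q_{1,i}$ belongs to a class of polygons with boundedly many edges ``since edges have length $\ge r$ and the cell has diameter $\sqrt2$''. The edge-length constraint bounds neither the number of edges crossing a cell nor the length of $\partial\Sigma^\#$ inside it.

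Here is a counterexample. Take a comb whose teeth $[x_j,x_j+w_j]\times[b-1,b+2]$ cross the cell $[a,a+1]\times[b,b+1]$ vertically. Cap each tooth above $y=b+2$ by a triangular spike of height $r$, separate neighbouring teeth below $y=b-1$ by V-shaped notches of depth $r$, and join everything by a base block. Every edge has length $\ge r$, the polygon is simple, and its intersection with the cell is an arbitrary finite union of vertical strips $A\times[b,b+1]$.

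Over that class $\sup_S\xi(S)=\sup_A\int_A\eta$, with $\eta$ a one-dimensional white noise. This is the positive variation of a Brownian path, which is $+\infty$ almost surely. So your $\mathcal{E}$, with $S$ ranging over cell-intersections of polygons with edges $\ge r$, is infinite. The chaining/Borell--TIS argument of your ``main obstacle'' only covers polygons with $\lesssim r^{-1}$ edges, and $\Sigma^\#\cap Q_{1,i}$ need not belong to that class. Nor can you get per-cell control from regularity: $\Sigma^\#$ optimizes against white noise rather than a bounded density, so there is no $\kappa$-minimality at unit scale.

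The missing idea, which is what the paper does, is to allow unbounded complexity per cell and pay for it with perimeter. Let $N_i$ be the number of vertices of $\Sigma\cap Q_{1,i}$. Triangulating gives $|W(\Sigma\cap Q_{1,i})|\lesssim N_i\sup_{T\subset Q_{1,i}}|W(T)|$, with $T$ ranging over triangles; this is a six-parameter family where your chaining argument does apply. Each such vertex is a vertex of $\Sigma$, a corner of the cell, or a crossing of an edge of $\Sigma$ with a cell side. Hence $\sum_i N_i\lesssim\#\{\text{edges of }\Sigma\}+|\mathcal{I}|+P(\Sigma,Q_L)\lesssim P(\Sigma,Q_L)/r$; this global count, not a per-cell count, is where the constraint that edges have length $\ge r$ enters. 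The result is a bound on $\max_{\Sigma\in\mathcal{P}_{L,r}}|W_1(\Sigma)-W(\Sigma)|/P(\Sigma,Q_L)$ that is uniform over the class. It gives your reverse bound at once and also handles the $W_1$-versus-$W$ step in the other direction.

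On that other direction, your first competitor $\mathring\Sigma_*$ does not work, and the additive fix does not rescue it. The extra penalty $R\,(P(\mathring\Sigma_*,Q_L)-P(\Sigma_*,Q_L))$ can be of order $R\,P(\Sigma_*,Q_L)$ with $R\sim\ln^{3/4}L$; think of a boundary at $45^\circ$, whose staircase is $\sqrt2$ times longer. That is an error of the size of the maximum itself. So the inscribed polygon is necessary, not optional, and its vertex spacing must be $\ll\kappa^{-1}$ so that the single-curve property makes it simple. This is the paper's construction. For that particular polygon a per-cell complexity bound does hold, thanks to the curvature bound and single-curve property of $\partial\Sigma_*$, so your gap only bites in the reverse direction.
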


{\sc Proof.}
As a first step, we show that we can restrict the first maximization problem to polygons. More precisely, for some positive constant $r$,
\begin{align}\label{e12}
\max_{\Sigma\subset Q_L}\frac{W_1(\Sigma)}{P(\Sigma,Q_L)}-\max_{\Sigma\in\mathcal{P}_{L,r}}\frac{W_1(\Sigma)}{P(\Sigma,Q_L)}\lesssim \|\xi_1\|_{L^\infty},
\end{align}
where the r.~h.~s.~ is of the desired order since, in view of Lemma~\ref{L:11} (iii),
\begin{align}\label{M17}
\big\|\|\xi_1\|_{L^\infty}\big\|_2\lesssim\ln^{1/2}L.
\end{align}
Indeed, let $\Sigma_*$ be a maximizer of the first problem on the r.~h.~s.~ of \eqref{e12}. By the same argument as given in \eqref{M7} in Lemma \ref{L:4}, $\Sigma_*$ is a $\kappa$-perimeter minimizer with $\kappa\lesssim 1$ and by Lemma \ref{L:reg} we have
\begin{align}
&\mbox{the connected components of $\partial\Sigma_*\cap Q_L$ are $C^{1,1}$ curves}\nonumber\\&\mbox{with modulus of the curvature $\lesssim1$,}\label{e14}\\
&\partial \Sigma_*\cap B_r(x)\mbox{ is a single curve for all $x\in\partial \Sigma_*\cap Q_L$ for some $r\ll 1$}\label{e11}.
\end{align}
We next construct a polygonal approximation $\Sigma_{*,r}\in\mathcal{P}_{L,r}$ of $\Sigma_*$ such that
\begin{align}
&P(\Sigma_{*,r},Q_L)\leq P(\Sigma_*,Q_L)\quad\mbox{and}\nonumber\\&|W_1(\Sigma_*)-W_1(\Sigma_{*,r})|\lesssim \|\xi_1\|_{L^\infty} P(\Sigma_*,Q_L).\label{e13}
\end{align}
This suffices to get \eqref{e12} by using $\Sigma_{*,r}$ as a competitor for the second problem,
\begin{align*}
\max_{\Sigma\subset Q_L}\frac{W_1(\Sigma)}{P(\Sigma,Q_L)}-\max_{\Sigma\in\mathcal{P}_{L,r}}\frac{W_1(\Sigma)}{P(\Sigma,Q_L)}\leq \frac{|W_1(\Sigma_*)-W_1(\Sigma_{*,r})|}{P(\Sigma_*,Q_L)}\lesssim \|\xi_1\|_{L^\infty}.
\end{align*}

\medskip

We construct $\Sigma_{*,r}$ as follows. For each $C^{1,1}$ curve $\gamma:[0,\len(\gamma)]\to Q_L$ in $\partial \Sigma_*\cap Q_L $, we find $0=t_0<t_1<\ldots t_k\leq t_{k+1}=\len(\gamma)$ such that 
\begin{align*}
&|\gamma(t_i)-\gamma(t_{i+1})|=r\quad\text{for all}\quad i=0,\ldots, k-1;\\
&|\gamma(t_k)-\gamma(t_{k+1})|\in[r,2r].
\end{align*}
By \eqref{e11}, $\{t_i \}_{i=1}^k$ are uniquely defined and linearly interpolating $\{\gamma(t_i)\}_{i=0}^{k+1}$ gives a simple polygonal curve which approximates $\gamma$. Repeating such an approximation on each connected component of $\partial\Sigma_*\cap Q_L$ gives us a family of oriented, simple and disjoint, polygonal curves. Hence they describe the boundary of a set $\Sigma_{*,r}\in\mathcal{P}_{L,r}$. The first inequality in \eqref{e13} is clear by construction. For the second, 
\begin{align*}
|W_1(\Sigma_*)-W_1(\Sigma_{*,r})|&\leq 2\|\xi_1\|_{L^\infty}\lambda(\Sigma_*\Delta\Sigma_{*,r})\\&\leq 2\|\xi_1\|_{L^\infty}\lambda((\partial\Sigma_*)^r)\lesssim \|\xi_1\|_{L^\infty} P(\Sigma_*,Q_L),
\end{align*}
where for the last step we used \eqref{e14}, cf. the proof of \eqref{m5}.

\medskip

Having established \eqref{e12}, to conclude it is enough to show
\begin{align*}
\big\|\max_{\Sigma\in\mathcal{P}_{L,r}}\frac{|W_1(\Sigma)-W(\Sigma)|}{P(\Sigma,Q_L)}\big\|_2\lesssim\ln^{1/2}L.
\end{align*}
Inserting the inner lattice animal approximation $\mathring{\Sigma}$ (defined as \eqref{M16} with $l=1$) and using \eqref{e17}, we have
\begin{align}
|W_1(\Sigma)-W(\Sigma)|&\le|W_1(\Sigma)-W_1(\mathring{\Sigma})|+|W(\Sigma)-W(\mathring{\Sigma})|\nonumber\\&
\le\sum_{i\in\mathcal{I}}|W_1(\Sigma\cap Q_{1,i})|+\sum_{i\in\mathcal{I}}|W(\Sigma\cap Q_{1,i})|\label{M26}
\end{align}
where we recall that $\mathcal{I}:=\{i~|~Q_{1,i}\cap\Sigma\neq\emptyset~\mbox{and}~Q_{1,i}\not\subset\Sigma\}$. We observe that 
\begin{align}\label{M32}
|\mathcal{I}|\lesssim P(\Sigma,Q_L)
\end{align}
since each side of the polygonal boundary $\partial \Sigma$ can intersect at most 3 of the cubes $\{Q_{1,i}\}$. Together with the estimate $|W_1(\Sigma\cap Q_{1,i})|\le2\|\xi_1\|_{L^\infty}$ and  \eqref{M17}, this is enough to control the first r.~h.~s.~term of \eqref{M26}.

\medskip

We now show that the second r.~h.~s.~term of \eqref{M26} satisfies
\begin{align}
\frac{1}{P(\Sigma,Q_L)}\sum_{i\in\mathcal{I}}|W(\Sigma\cap Q_{1,i})|\lesssim\sup_{i=1}^{L^2}\sup_{\text{triangle}~T\subset Q_{1,i}}|W(T)|.\label{M27}
\end{align}
This suffices to conclude: For every $i$ fixed, the set of triangles $T\subset Q_{1,i}$ is finite-dimensional and compact. By standard arguments on the supremum of Gaussians,
\begin{align*}
\big\|\sup_{\text{triangle}~T\subset Q_{1,i}}|W(T)|\big\|_2<\infty.
\end{align*}
The additional supremum in $i=1,\ldots,L^2$ gives a multiplicative contribution of $\ln^{1/2}L$, in view of Lemma~\ref{L:11} $(iii)$. 

\medskip

To prove \eqref{M27}, we introduce 
\begin{align*}
N_i(\Sigma):=|\{\mbox{vertices of the polygon $\Sigma\cap Q_{1,i}$}\}|,
\end{align*}
which allows us to write
\begin{align*}
\sum_{i\in\mathcal{I}}|W(\Sigma\cap Q_{1,i})|\le\sup_{i=1}^{L^2}\frac{|W(\Sigma\cap Q_{1,i})|}{N_i(\Sigma)}\sum_{i\in\mathcal{I}}N_i(\Sigma).
\end{align*}
We note that a vertex of $\Sigma\cap Q_{1,i}$ is either a vertex of $\Sigma$, or of $Q_{1,i}$ or is the intersection of an edge of $\Sigma$ and an edge of $Q_{1,i}$. Therefore, we have
\begin{align*}
\sum_{i\in\mathcal{I}}N_i(\Sigma)\lesssim|\{\mbox{vertices of $\Sigma$}\}|+|\mathcal{I}|+|\{\mbox{edges of $\Sigma$}\}|\overset{\eqref{M32}}{\lesssim}P(\Sigma,Q_L).
\end{align*}
Note that the polygon $\Sigma\cap Q_{1,i}$ can be inductively decomposed into the union of $N_i(\Sigma)-2$ triangles and therefore 
\begin{align*}
\frac{|W(\Sigma\cap Q_{1,i})|}{N_i(\Sigma)}\le\sup_{\text{triangle}~T\subset Q_{1,i}}|W(T)|.
\end{align*}
Combining the last three inequalities, we deduce \eqref{M27} and conclude.
\qed

\subsection{Proof of Theorem~\ref{T:1}.} Let us recall that from \eqref{M1} and \eqref{M3}
\begin{align}\label{M30}
\ln^{1/2}L\ll l\ll\ln^{3/4}L.
\end{align}
Given a threshold $\nu$, we define the good event
\begin{align*}
G:=\big\{\|\rho_l-1\|_{L^\infty}\le\nu l^{-1}\ln^{1/2}L\big\}\quad\mbox{such that}\quad\ln\mathbb{P}(G^c)\le-\ln L,
\end{align*}
where the second inequality holds provided $\nu\gg1$ in view of Lemma~\ref{L:3}. Since $\mathrm{Wass}_\infty(\mu,\lambda)\le\sqrt{2}L$ (by choosing any coupling $\pi$), the contribution of the bad set $G^c$ is negligible:
\begin{align}\label{M31}
\mathbb{E}I(G^c)\mathrm{Wass}_{\infty}(\mu,\lambda)\lesssim Le^{-\ln L}=1.
\end{align}

\medskip

In the good event, we have that $\|\rho_l-1\|_{L^\infty}\ll1$ by \eqref{M30}, which allows us to apply Proposition~\ref{P:1}. By the triangle inequality in \eqref{M2} and \eqref{M30}
\begin{align*}
\mathrm{Wass}_\infty(\mu,\lambda)\approx\max_{\Sigma\subset Q_L}\frac{(\rho_l-1)\lambda(\Sigma)}{P(\Sigma,Q_L)}+O(l).
\end{align*}

It is convenient to record the following concentration result: For any $\Sigma$, 
\begin{align*}
&\frac{W(\Sigma)}{P(\Sigma,Q_L)}\mbox{ is a Gaussian with variance} \\
&\frac{\lambda(\Sigma)\lambda(Q_L\backslash \Sigma)}{\lambda(Q_L)}\cdot\frac{1}{P(\Sigma,Q_L)^2}\leq \frac{\min\{\lambda(\Sigma),\lambda(Q_L\backslash \Sigma)\}}{P(\Sigma,Q_L)^2}\lesssim 1
\end{align*}
where the last inequality is the relative isoperimetric inequality. For the supremum of Gaussians with bounded variance,  Borell's inequality gives (cf.~\cite{BLM}) 
\begin{align*}
\|\max_{\Sigma\in\mathcal{P}_{L,1}}\frac{W(\Sigma)}{P(\Sigma,Q_L)}-\mathbb{E}\max_{\Sigma\in\mathcal{P}_{L,1}}\frac{W(\Sigma)}{P(\Sigma,Q_L)}\|_2\lesssim1.
\end{align*}

Combining it with Lemma~\ref{L:4} and Lemma~\ref{L:6}, we get
\begin{align*}
\big\|\max_{\Sigma\subset Q_L}\frac{(\rho_l-1)\lambda(\Sigma)}{P(\Sigma,Q_L)}-\mathbb{E}\max_{\Sigma\in\mathcal{P}_{L,lr}}\frac{W(\Sigma)}{P(\Sigma,Q_L)}\big\|_1\lesssim\ln^{1/2}L.
\end{align*}
Hence in the good event $G$ and for some random variable $X$ with $\|X\|_1\lesssim1$
\begin{align*}
\mathrm{Wass}_\infty(\mu,\lambda)\approx\mathbb{E}\max_{\Sigma\in\mathcal{P}_{L,lr}}\frac{W(\Sigma)}{P(\Sigma,Q_L)}+X\ln^{1/2}L+O(l).
\end{align*}
Taking expected values and using that $\mathbb{P}(G^c)\ll1$,
\begin{align*}
\mathbb{E}\mathrm{Wass}_\infty(\mu,\lambda)\overset{\eqref{M31}}&{=}\mathbb{E}I(G)\mathrm{Wass}_{\infty}(\mu,\lambda)+O(1)\\&\approx\mathbb{E}\max_{\Sigma\in\mathcal{P}_{L,lr}}\frac{W(\Sigma)}{P(\Sigma,Q_L)}+O(\ln^{1/2}L)+O(l).
\end{align*}
Since $\ln(L/lr)\approx\ln L$, by the rescaling $x=lr\hat x$ and applying Theorem~\ref{T:4} we conclude.

\subsection{Bipartite Matching}
In this subsection, we turn to the proof of Theorem \ref{T:3}. Since it is almost analogous, we just sketch it and comment on the differences. As before, we coarse-grain the measures $\mu_X,\mu_Y$ to 
\begin{align*}
    \rho_l\lambda:=\sum_{i=1}^{(L/l)^2}\frac{\mu_X(Q_{l,i})}{\lambda(Q_{l,i})}\lambda\llcorner Q_{l,i}\quad\text{and}\quad\sigma_l\lambda:=\sum_{i=1}^{(L/l)^2}\frac{\mu_Y(Q_{l,i})}{\lambda(Q_{l,i})}\lambda\llcorner Q_{l,i}.
\end{align*}
Again by triangle inequality,
\begin{align}
&|\mathrm{Wass}_\infty(\mu_X,\mu_Y)-\mathrm{Wass}_\infty(\rho_l\lambda,\sigma_l\lambda)|\nonumber\\&\leq \mathrm{Wass}_\infty(\mu_X,\rho_l\lambda)+\mathrm{Wass}_\infty(\mu_Y,\sigma_l\lambda)\lesssim l.\label{a12}
\end{align}
Thus it is enough to treat $\mathrm{Wass}_\infty(\rho_l\lambda,\sigma_l\lambda)$, as long as we choose $l\ll\ln^{3/4}L$. The natural extension of Proposition \ref{P:1} is the following statement.
\begin{proposition}\label{P:2}
    Let $\rho,\sigma$ be densities satisfying $\int\rho d\lambda=\int\sigma d\lambda$. Whenever $\|\rho-1\|_{L^\infty},\|\sigma-1\|_{L^\infty}\ll1$,
    \begin{align*}
        \mathrm{Wass}_\infty(\rho\lambda,\sigma\lambda)=\big(1+O(\|\rho-1\|_{L^\infty}+\|\sigma-1\|_{L^\infty})\big)\max_{\Sigma\subset Q_L}\frac{(\rho-\sigma)\lambda(\Sigma)}{P(\Sigma,Q_L)}.
    \end{align*}
\end{proposition}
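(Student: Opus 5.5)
We mirror the proof of Proposition~\ref{P:1}, replacing $\lambda$ by $\sigma\lambda$ as the target measure. The starting point is the dual formulation \eqref{m3} in its general form given by Strassen's theorem:
\begin{align*}
\mathrm{Wass}_\infty(\rho\lambda,\sigma\lambda)=\inf\{R\mid \sigma\lambda(\Sigma^R)\ge\rho\lambda(\Sigma)\ \text{for all }\Sigma\subset Q_L\}.
\end{align*}
The basic identity we use is
\begin{align*}
\sigma\lambda(\Sigma^R)-\rho\lambda(\Sigma)=\sigma\lambda(\Sigma^R\backslash\Sigma)-(\rho-\sigma)\lambda(\Sigma).
\end{align*}
Its first term can be compared to $\lambda(\Sigma^R)-\lambda(\Sigma)$ up to factors $1\pm\|\sigma-1\|_{L^\infty}$. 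We set $\delta:=\|\rho-1\|_{L^\infty}+\|\sigma-1\|_{L^\infty}$ and $R_*:=\max_\Sigma(\rho-\sigma)\lambda(\Sigma)/P(\Sigma,Q_L)$.

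\textbf{Upper bound (as in Lemma~\ref{L:1}).} Set $R:=(1-\delta)^{-1}R_*$. Fix $\Sigma$ and use the coarea argument to choose $r\in(0,R)$ with $\lambda(\Sigma^R)-\lambda(\Sigma)\ge RP(\Sigma^r,Q_L)$, i.e.\ \eqref{m2}. Then write
\begin{align*}
\sigma\lambda(\Sigma^R)-\rho\lambda(\Sigma)=\sigma\lambda(\Sigma^R\backslash\Sigma)+(\rho-\sigma)\lambda(\Sigma^r\backslash\Sigma)-(\rho-\sigma)\lambda(\Sigma^r).
\end{align*}
Since $\Sigma^r\backslash\Sigma\subset\Sigma^R\backslash\Sigma$, the sum of the first two terms is bounded below by $(1-\|\sigma-1\|_{L^\infty}-\|\rho-\sigma\|_{L^\infty})\lambda(\Sigma^R\backslash\Sigma)$. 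This is at least $(1-2\delta)RP(\Sigma^r,Q_L)$. Hence the whole expression is nonnegative by the definition of $R_*$, provided we choose $R=(1-2\delta)^{-1}R_*$; this only affects the constant in the $O(\delta)$.

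\textbf{Lower bound (as in Lemma~\ref{L:2}).} Take a maximizer $\Sigma_*$ of the ratio defining $R_*$. Its optimality condition gives the $\kappa$-perimeter minimality \eqref{M6} with density $\rho-\sigma$ and $\kappa:=\|\rho-\sigma\|_{L^\infty}/R_*\le\delta/R_*$. Lemma~\ref{L:reg} then gives curvature bounded by $\kappa$, and the area-formula argument \eqref{a15}--\eqref{m14} yields \eqref{m5}:
\begin{align*}
\lambda(\Sigma_*^R)-\lambda(\Sigma_*)\le(1+\kappa R)RP(\Sigma_*,Q_L).
\end{align*}
Hence for $R<R_*$,
\begin{align*}
\sigma\lambda(\Sigma_*^R)-\rho\lambda(\Sigma_*)\le(1+\|\sigma-1\|_{L^\infty})(1+\kappa R)RP(\Sigma_*,Q_L)-R_*P(\Sigma_*,Q_L).
\end{align*}
Using $\kappa R\le\delta$, the right-hand side is at most $\big((1+\delta)^2-R_*/R\big)RP(\Sigma_*,Q_L)$, which is negative when $R<(1+\delta)^{-2}R_*$. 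This gives $\mathrm{Wass}_\infty\ge(1+O(\delta))^{-1}R_*$.

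\textbf{Remaining points.} There is no real obstacle; the two steps that need a check are the following.
\begin{enumerate}
\item Existence of a maximizer $\Sigma_*$, and the requirement $R_*>0$. We have $R_*\ge0$ because the choice $\Sigma=\emptyset$ gives ratio $0$, and the degenerate case $R_*=0$ is trivial.
\item Lemma~\ref{L:reg} is stated for $\kappa$-perimeter minimizers without any sign condition on the density, so it applies verbatim to $\rho-\sigma$, which has zero mean but no sign.
\end{enumerate}
Throughout, one should use $\|\rho-\sigma\|_{L^\infty}\le\delta$, which is what makes all constants of order $O(\delta)$.
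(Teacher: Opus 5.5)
Your proposal is correct and follows the paper's proof essentially verbatim. The upper bound uses the coarea radius $r$ and the splitting $\sigma\lambda(\Sigma^R\backslash\Sigma)+(\rho-\sigma)\lambda(\Sigma^r\backslash\Sigma)-(\rho-\sigma)\lambda(\Sigma^r)$. The lower bound uses the $\kappa$-perimeter minimizer with $\kappa=\|\rho-\sigma\|_{L^\infty}/R_*$ together with $\sigma\lambda(\Sigma_*^R\backslash\Sigma_*)\le(1+\|\sigma-1\|_{L^\infty})\lambda(\Sigma_*^R\backslash\Sigma_*)$. Your constants $(1-2\delta)^{-1}$ and $(1+\delta)^{-2}$ are cruder forms of the paper's and still give $1+O(\delta)$. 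One small fix: $\Sigma=\emptyset$ gives the ratio $0/0$, not $0$. Justify $R_*\ge0$ instead by noting that $\Sigma$ and $Q_L\backslash\Sigma$ have the same relative perimeter and opposite discrepancy, since $\int\rho\,d\lambda=\int\sigma\,d\lambda$.
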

{\sc Proof.}
We begin by showing the analogue of Lemma \ref{L:1}, namely
\begin{align}\label{a13}
    &\mathrm{Wass}_\infty(\rho\lambda,\sigma\lambda)\nonumber\\&\leq(1-\|\sigma-1\|_{L^\infty}-\|\rho-\sigma\|_{L^\infty})^{-1}\max_{\Sigma\subset Q_L}\frac{(\rho-\sigma)\lambda(\Sigma)}{P(\Sigma,Q_L)}.
\end{align}
Let us denote the r.~h.~s.~by $R$. Given $\Sigma\subset Q_L$ and $r$ as in (\ref{m2}), we have
\begin{align*}
    \sigma\lambda(\Sigma^R)-\sigma\lambda(\Sigma)&=\int_{Q_L}\big(I({\Sigma^R})-I(\Sigma)\big)\sigma(x)dx\\
    &\geq (1-\|1-\sigma\|_{L^\infty})(\lambda(\Sigma^R)-\lambda(\Sigma))\\
    &\geq (1-\|1-\sigma\|_{L^\infty})RP(\Sigma^r,Q_L)
\end{align*}
Proceeding as in the proof of Lemma \ref{L:1}, we recover the condition
\begin{align*}
RP(\Sigma^r,Q_L)(1-\|\sigma-1\|_{L^\infty}-\|\rho-\sigma\|_{L^\infty})-(\rho-\sigma)\lambda(\Sigma^r)\ge0,
\end{align*}
which holds by definition of $R$.

\medskip

The needed refinement of Lemma \ref{L:2} is
\begin{align*}
    \mathrm{Wass}_\infty(\rho\lambda,\sigma\lambda)\ge((1+\|1-\sigma\|_{L^\infty})\big(1+\|\rho-\sigma\|_{L^\infty})\big)^{-1}\max_{\Sigma\subset Q_L}\frac{(\rho-\sigma)\lambda(\Sigma)}{P(\Sigma,Q_L)}.
\end{align*}
The proof is the same up to choosing $\kappa=\|\rho-\sigma\|_{L^\infty}/R_*$ and extending (\ref{m5}) to
\begin{align*}
\sigma\lambda(\Sigma_*^R)-\sigma\lambda(\Sigma_*)\leq(1+\|1-\sigma\|_{L^\infty})(1+\kappa R)RP(\Sigma_*,Q_L).
\end{align*}
This however follows from (\ref{a15}) and 
\begin{align*}
\sigma\lambda(\Sigma^R_*)-\sigma\lambda(\Sigma_*)\leq(1+\|1-\sigma\|_{L^\infty})(\lambda(\Sigma_*^R)-\lambda(\Sigma_*)).
\end{align*}
As before this proves 
\begin{align*}
    \sigma\lambda(\Sigma_*^R)<\rho\lambda(\Sigma_*)\quad\mbox{if}\quad R<((1+\|1-\sigma\|_{L^\infty})(1+\|\rho-\sigma\|_{L^\infty}))^{-1}R_*.
\end{align*}
Absorbing $\|1-\sigma\|_{L^\infty}\cdot\|\rho-\sigma\|_{L^\infty}$ into one of the other terms we recover Proposition \ref{P:2}.
\qed

\medskip

Combining Proposition \ref{P:2} and Lemma \ref{L:3}, with overwhelming probability
\begin{align*}
    \mathrm{Wass}_\infty(\mu_X,\mu_Y)\approx\max_{\Sigma\subset Q_L}\frac{(\rho_l-\sigma_l)\lambda(\Sigma)}{P(\Sigma,Q_L)}\quad\mbox{provided}\quad \ln^{1/2}L\ll l\ll\ln^{3/4}L.
\end{align*}
To approximate $(\rho_l-\sigma_l)\lambda$ by a Gaussian term, we construct two independent fields $W_l^1$ and $W_l^2$ with the same law as $W_l$ in \eqref{M4} and satisfying
\begin{align*}
&(\rho_l-1)\lambda\approx W_l^1\quad\mbox{and}\quad(\sigma_l-1)\lambda\approx W_l^2\\
&\mbox{so that}\quad(\rho_l-\sigma_l)\lambda\approx \tilde{W}_l:=W_l^1-W_l^2.
\end{align*}
Lemma \ref{L:5} implies, after application of the triangle inequality, that
\begin{align*}
\|\max_{\Sigma\in\mathcal{A}_l}\frac{|(\rho_l-\sigma_l)\lambda(\Sigma)-\tilde{W}_l(\Sigma)|}{P(\Sigma,Q_L)}\|_1\lesssim l^{-1}\ln L.
\end{align*}
All the other results translate directly to this case, which, together with $\tilde{W}_l=_{\text{law}}\sqrt2W_l$, proves Theorem \ref{T:3}. 

\section{Acknowledgements}

AW acknwoledges funding by the DFG Priority Programm ``Random Geometric Systems'' (SPP 2265) - project P10 ``Optimal matching and balancing transport''. This research has been supported by the DFG through the Research Training Group ``Rigorous Analysis of Complex Random Systems'' (RTG 3027) funded by the Deutsche Forschungsgemeinschaft (DFG, German Research Foundation – Project Number 524444762) and under Germany's Excellence Strategy EXC 2044/2 –390685587, Mathematics Münster: Dynamics–Geometry–Structure.
\section{Appendix}

\begin{lemma}[{\cite[Lemma 5 and Lemma 14]{OPW}}]\label{L:11}
For $s\in[1,\infty)$ and positive random variables $X, \{X_i\}_{i=1}^\infty$
\begin{align*}
&(i)&&\ln\mathbb{P}(X\ge \nu)\lesssim-(\nu/\|X\|_s)^s\quad\mbox{for
all}~\nu\gg\|X\|_s;\\
&(ii)&&\mbox{if}\quad\ln\mathbb{P}(X\ge\nu)\le-\nu^s\quad\mbox{for}~\nu\ge\nu_0>0\quad\mbox{then}
\quad\|X\|_s-\nu_0\lesssim1;\\
&(iii)&&\|\max_{i=1}^n X_i\|_s\lesssim(\ln^{1/s}n)\max_{i=1}^n\|X_i\|_s.
\end{align*}
\end{lemma}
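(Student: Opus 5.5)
This is a standard Orlicz-norm toolkit, and I would prove the three items in order, working directly from the definition $\|X\|_s=\inf\{N>0\mid \mathbb{E}\exp((X/N)^s)\le e\}$.

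\emph{Item (i).} Set $N:=\|X\|_s$, so that by monotone convergence $\mathbb{E}\exp((X/N)^s)\le e$. Markov's inequality applied to $\exp((X/N)^s)$ gives
\begin{align*}
\mathbb{P}(X\ge\nu)\le e\exp(-(\nu/N)^s).
\end{align*}
For $\nu\gg N$ we have $(\nu/N)^s\ge 2$, and the factor $e$ is absorbed: $\ln\mathbb{P}(X\ge\nu)\le 1-(\nu/N)^s\le-\frac12(\nu/N)^s$.

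\emph{Item (ii).} I test the definition with $N:=\nu_0+C$ for a universal $C$ still to be chosen, and use the layer-cake formula
\begin{align*}
\mathbb{E}\exp((X/N)^s)=1+\int_0^\infty \frac{s t^{s-1}}{N^s}e^{(t/N)^s}\,\mathbb{P}(X\ge t)\,dt .
\end{align*}
On $t\le\nu_0$ I bound the probability by $1$, which contributes at most $e^{(\nu_0/N)^s}-1$. On $t\ge\nu_0$ I insert the hypothesis $\mathbb{P}(X\ge t)\le e^{-t^s}$; since $N\ge C\ge2$, the integrand is $\le st^{s-1}N^{-s}e^{-t^s/2}$, whose integral is $\lesssim N^{-s}$. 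Both pieces are small once $\nu_0/N$ and $1/N$ are small. This is the step I expect to need care, because $s\ge1$ and $\nu_0$ are arbitrary. The issue is the first piece: the ratio $\nu_0/(\nu_0+C)$ is not small when $\nu_0$ is large. I would therefore split into two regimes. If $\nu_0\le1$, take $N=C$ large and universal. If $\nu_0>1$, use $(\nu_0/(\nu_0+C))^s\le\nu_0/(\nu_0+C)\le 1-C/(2\nu_0)$, which shows the total is still $\le e$ provided the tail piece is $\le e-e^{1-c/\nu_0}$. I suspect the clean version is to choose $N=\nu_0+C$ with $C$ large; if this bookkeeping fails, I would settle for the weaker conclusion $\|X\|_s\lesssim\nu_0+1$, which is all that is used later.

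\emph{Item (iii).} Let $M:=\max_i\|X_i\|_s$. The union bound together with (i) gives, for $\nu\gg M$,
\begin{align*}
\mathbb{P}(\max_i X_i\ge\nu)\le n\exp(-c(\nu/M)^s).
\end{align*}
Write $Y:=\max_iX_i/(KM\ln^{1/s}n)$ with $K$ large (for $n\ge2$; the case $n=1$ is trivial). Then
\begin{align*}
\ln\mathbb{P}(Y\ge\nu)\le\ln n-cK^s\nu^s\ln n\le-\nu^s \quad\text{for }\nu\ge\nu_0\sim1,
\end{align*}
once $K$ is chosen large. Applying (ii) gives $\|Y\|_s\lesssim1$, and the bound scales back to the claim $\|\max_iX_i\|_s\lesssim\ln^{1/s}n\,M$.
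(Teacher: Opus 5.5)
The paper gives no proof of this lemma; it quotes it from \cite{OPW}. Your arguments for (i) and (iii) are the standard ones and are correct. For (i) you apply Markov to $\exp((X/N)^s)$ with $N=\|X\|_s$. For (iii) you use the union bound, then (i), then (ii) applied to the rescaled maximum with $\nu_0\sim1$. One quibble in (iii): for $n=1$ the statement as written forces $\|X_1\|_s\lesssim0$. That case is therefore not ``trivial''; the statement has to be read for $n\ge2$, or with $\ln n$ replaced by $1+\ln n$.

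In (ii) your layer-cake setup and both bounds on the two pieces are right, but the case split does not work as written. The inequality $\nu_0/(\nu_0+C)\le1-C/(2\nu_0)$ is equivalent to $\nu_0\ge C$. It therefore fails for $1<\nu_0<C$, where its right-hand side can even be negative. No case distinction is needed. Take $N=\nu_0+C$ with $C\ge2$, and use $\nu_0/N<1\le s$ and $N^{-s}\le N^{-1}$. Your two pieces then give
\begin{align*}
\mathbb{E}\exp((X/N)^s)\le e^{\nu_0/N}+\frac{2}{N},
\qquad
e-e^{\nu_0/N}=e\big(1-e^{-C/N}\big)\ge(e-1)\frac{C}{N},
\end{align*}
where the last step uses concavity of $x\mapsto1-e^{-x}$ on $[0,1]$. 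Hence the expectation is $\le e$ as soon as $(e-1)C\ge2$. So $C=2$ already gives $\|X\|_s\le\nu_0+2$, uniformly in $\nu_0>0$ and $s\ge1$.

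The point your bookkeeping was circling is that the slack below $e$ is only of order $C/N$ when $\nu_0$ is large. The tail piece, however, is of order at most $1/N$. The two therefore compete at the same scale, and the choice of $C$ settles it.

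Your fallback $\|X\|_s\lesssim\nu_0+1$ is also immediate, for example with $N=2(\nu_0+1)$, which gives $e^{1/2}+1<e$. It would indeed suffice for how the lemma is used here. One use is turning the KMT tail bound into $\|\cdot\|_1\lesssim\ln L$, where $\nu_0\sim\ln L$. The other is inside your own proof of (iii), where $\nu_0\sim1$.
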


\begin{lemma}\label{L:reg}
Assume that $\Sigma\subset Q_L$ satisfies for every ball $B$,
\begin{align}\label{m8}
P(\Sigma,B\cap Q_L)\le P(\Sigma',B\cap Q_L)+\kappa|\Sigma\Delta\Sigma'|\quad\mbox{provided $\Sigma\Delta\Sigma'\subset B$}.
\end{align}
Then $\partial\Sigma\cap Q_L$ is the union of $C^{1,1}$ curves with curvature $\le\kappa$. Moreover, for every $R\ll\kappa^{-1}$ and $x\in\partial\Sigma\cap Q_L$,
\begin{align}\label{m9}
\partial\Sigma\cap B_R(x)~\mbox{consists of a single curve}.
\end{align}
\end{lemma}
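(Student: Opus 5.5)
We prove Lemma~\ref{L:reg} by standard regularity theory for almost-minimizers of perimeter in the plane. The plan has three steps: first obtain $C^{1,\alpha}$ regularity, then upgrade to a curvature bound via the Euler--Lagrange inequality, and finally derive \eqref{m9} by a local comparison.

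\emph{Step 1: $C^{1,\alpha}$ regularity.} Condition \eqref{m8} states that $\Sigma$ is a $(\Lambda,r_0)$-minimizer of the perimeter in $Q_L$ with $\Lambda=\kappa$. Indeed, for $\Sigma\Delta\Sigma'\subset B_\rho$ we have $|\Sigma\Delta\Sigma'|\le\pi\rho^2$, so the deficit is $O(\kappa\rho^2)=O(\rho^{1+1})$. By the classical theory (e.g.\ Maggi's book, Theorems 21.8 and 26.5), the reduced boundary is a $C^{1,1/2}$ hypersurface in $Q_L$. In dimension $2$ the singular set is empty, since singularities only occur for $n\ge8$. We then use the standard density estimates: for $x\in\partial\Sigma$ and $\rho\lesssim\kappa^{-1}$,
\[
c\rho^2\le|\Sigma\cap B_\rho(x)|\le(\pi-c)\rho^2
\quad\text{and}\quad
P(\Sigma,B_\rho(x))\le C\rho .
\]
These identify the topological boundary with the reduced boundary after choosing a good representative. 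Consequently $\partial\Sigma\cap Q_L$ is a union of embedded $C^1$ curves.

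\emph{Step 2: curvature bound.} Locally, write $\partial\Sigma$ as a graph $y=u(x)$ with $u\in C^{1}$, $|u'|$ small, and $\Sigma$ lying below the graph. Perturb by $u+t\varphi$ with $\varphi\in C_c^\infty$ and $t$ of either sign. Inserting this competitor into \eqref{m8} gives
\[
\int\sqrt{1+u'^2}\le\int\sqrt{1+(u'+t\varphi')^2}+\kappa|t|\int|\varphi| .
\]
Dividing by $t\to0^\pm$ yields
\[
\Bigl|\int\frac{u'}{\sqrt{1+u'^2}}\,\varphi'\Bigr|\le\kappa\int|\varphi| .
\]
Hence the distributional derivative of $u'/\sqrt{1+u'^2}$ lies in $L^\infty$ with bound $\kappa$. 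Therefore $u'$ is Lipschitz, so $u\in C^{1,1}$, and the curvature $\bigl(u'/\sqrt{1+u'^2}\bigr)'$ computed with respect to $x$ is bounded by $\kappa$. Passing to arc length, the curvature equals this quantity divided by $\sqrt{1+u'^2}\ge1$, so $|\gamma''|\le\kappa$.

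\emph{Step 3: the single-curve property \eqref{m9}.} This is the main obstacle, because curvature control alone does not prevent two distinct arcs of $\partial\Sigma$ from being close to each other. We first note that each arc through $x$ with $|\gamma''|\le\kappa$ stays within a lens-shaped neighbourhood of its tangent line on $B_R(x)$ when $R\ll\kappa^{-1}$. Suppose a second arc also meets $B_R(x)$. Then, again by the curvature bound, near $x$ both arcs are almost parallel straight segments at distance $\lesssim R$, separated by a thin strip of $\Sigma$ or of $\Sigma^c$. We compare with $\Sigma'$ obtained by filling or removing the strip inside a ball $B_{2\rho}$ with $\rho\sim R$. This replaces two chords of total length about $4\rho$ by two cross-segments of length $\lesssim R$ each, so the perimeter drops by $\gtrsim\rho$, while the volume cost is at most $\kappa\cdot CR\rho$. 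This contradicts \eqref{m8} when $R\ll\kappa^{-1}$. Equivalently, one can invoke the uniform lower density bound $P(\Sigma,B_\rho(x))\ge c\rho$ at a point of the second arc together with the upper bound from Step 1 at $x$, but the explicit strip-cut competitor is cleaner. Near $\partial Q_L$ the same argument works with relative perimeter, using that competitors may also use the boundary of $Q_L$, which only helps.
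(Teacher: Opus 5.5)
Your proposal is correct. For the first conclusion it follows the paper: you quote the $C^{1,\gamma}$ regularity (with empty singular set in $d=2$) from Maggi, and you obtain the curvature bound by testing with the graph perturbation $u+t\varphi$. The paper inserts $\phi'=\rho_\varepsilon(\cdot-x_1)-\rho_\varepsilon(\cdot-x_0)$ to get $|f'(u'(x_1))-f'(u'(x_0))|\le\kappa|x_1-x_0|$ directly. Your $L^1$--$L^\infty$ duality argument, bounding the distributional derivative of $u'/\sqrt{1+u'^2}$ by $\kappa$, has the same content, and your volume term $\kappa|t|\int|\varphi|$ is the correct one.

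The genuine difference is \eqref{m9}. The paper takes it from the classical theory (graphicality near regular points, Maggi Thm.~26.3), before proving the curvature bound. You instead derive it afterwards from the curvature bound, using an explicit competitor that cuts or fills a thin strip between two nearby arcs. Your route is more elementary and makes the uniform scale $R\ll\kappa^{-1}$ explicit. The bare citation leaves that uniformity implicit: $\varepsilon$-regularity only gives a single graph in balls where the excess is already small, so an extra density/compactness step is needed. The paper's route is shorter, and it handles $\partial Q_L$ cleanly by reflecting across the edges of $Q_L$. Your one-line remark about $\partial Q_L$ is only a sketch; you could simply borrow the reflection.

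A few small fixes to Step 3 and Step 2:
\begin{itemize}
\item Take $\rho=MR$ with $M\gg1$ (and then $MR\ll\kappa^{-1}$), not merely $\rho\sim R$. Otherwise the two chords of length about $2\rho$ each need not beat two cross-segments of length $\lesssim R$.
\item You implicitly use that both arcs actually traverse $B_{2\rho}$ and that you pick two adjacent arcs. The first holds because the curvature bound prevents an arc from ending or closing up inside the ball unless it hits $\partial Q_L$.
\item The arc-length curvature of the graph is exactly $\bigl(u'/\sqrt{1+u'^2}\bigr)'=u''/(1+u'^2)^{3/2}$, not this quantity divided by $\sqrt{1+u'^2}$. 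Your conclusion $|\gamma''|\le\kappa$ is unaffected.
\item Your ``equivalent'' density argument only works with sharp constants, namely $P(\Sigma,B_\rho(x))\le\pi\rho(1+\kappa\rho)$ against roughly $4\rho$ from two near-diametral chords. Generic constants $c,C$ give no contradiction.
\end{itemize}
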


{\sc Proof.} We note that, upon reflection along the edges of $Q_L$, one may assume that $\Sigma$ satisfies \eqref{m8} in the entire plane $\mathbb{R}^2$, hence one may forget about the square $Q_L$. Sets $\Sigma$ satisfying \eqref{m8} are said to be $\kappa$-perimeter minimizers \cite[Equation 21.2]{Mag}. It is a classical result that such sets satisfy a $C^{1,\gamma}$ partial regularity for all $\gamma<1/2$ and that \eqref{m9} holds at regular points \cite[Theorem 26.3]{Mag}. This can be upgraded to an everywhere $C^{1,\gamma}$ regularity in dimension $d\le 7$ (see \cite[Theorems 26.5 and 28.1]{Mag}); $d=2$ in our case. 

\medskip

It is a classical result that in dimension $d=2$, the $C^{1,\gamma}$ regularity can be improved to $C^{1,1}$. This is done by writing the Euler-Lagrange equations and showing that the curvature is $\le\kappa$. For the convenience of the reader, we display here a proof. The boundary $\partial\Sigma$ can be parameterized locally around each point by the graph of a $C^{1,\gamma}$ function $u:(-a,a)\to\mathbb{R}$. Given $\phi\in C_c^\infty(-a,a)$ and a real $t$ small enough, we test the condition \eqref{m8} with $\Sigma'$ whose boundary is locally given by the function $u+t\phi$ to get
\begin{align*}
\int_{-a}^a\sqrt{1+(u')^2}\le\int_{-a}^a\sqrt{1+(u'+t\phi')^2}+\kappa\big|\int_{-a}^a|u+t\phi|-\int_{-a}^a|u|\big|.
\end{align*}
Dividing by $t$ and taking the limit $t\to0$, we learn that
\begin{align*}
-\int f'(u')\phi'\le\kappa\int|\phi|\quad\mbox{where}\quad f(s):=\sqrt{1+s^2}.
\end{align*}

\medskip

Let us now deduce from this that the $C^{0,\gamma}$ function $f'(u')$ is indeed $\kappa$-Lipschitz, hence Lebesgue-a.~e.~differentiable, with derivative $\le\kappa$. Since $(f'(u'(x)))'$ is indeed the curvature of $\partial\Sigma$ at the point $(x,u(x))$, this is enough to conclude. To this end, let us choose $\phi$ so that
\begin{align*}
\phi'(x)=\rho_\varepsilon(x-x_1)-\rho_\varepsilon(x-x_0)
\end{align*}
where $\rho_\varepsilon$ is a mollification of the $\delta$-Dirac at scale $\varepsilon$. Testing with $\pm\phi$ and letting $\varepsilon$ go to 0, we deduce the desired Lipschitz continuity
\begin{align*}
|f'(u'(x_1))-f'(u'(x_0))|\le\kappa|x_0-x_1|.
\end{align*}
\qed

\medskip
{\sc Proof of Corollary \ref{L:7}.} 
We note that it is enough to prove 
\begin{align*}
        &\ln\mathbb P\big(\sup_{t\leq T}|\mu^{1d}([0,t])-\frac{n}{T}t-\sqrt{\frac nT}\big(\xi^{1d}([0,t])-\frac{t}{T}\xi^{1d}([0,T])\big)|\ge\nu+C\ln n\big)\\
        &\lesssim-\nu,
    \end{align*}
since the statement then follows by triangle inequality. The above inequality however is consequence of a simple scaling argument. Indeed,
\begin{align*}
    \xi^{1d}(\cdot)=T^{1/2}\hat{\xi}^{1d}(\cdot/T)
\end{align*}
is a white noise on the interval $[0,1]$. Let $\hat{\mu}^{1d}=\sum_{i=1}^n\delta_{\hat{Z}_i}$ be the random measure from Remark \ref{rk:5} coupled to $\hat\xi^{1d}$. Dilating the points $\hat{Z}_i$ to the interval $[0,T]$ by $Z_i:=T\hat{Z}_i$ defines the measure $\mu^{1d}:=\sum_{i=1}^n\delta_{Z_i}$, which is a binomial process on the interval $[0,T]$. Setting $t=T\hat t$,
\begin{align*}
    |\mu^{1d}([0,t])-\frac{n}{T}t-&\sqrt{\frac nT}\big(\xi^{1d}([0,t])-\frac{t}{T}\xi^{1d}([0,T])\big)|\\&=|\hat\mu^{1d}([0,\hat {t}])-n\hat t-\sqrt{n}\big(\hat\xi^{1d}([0,\hat t])-\hat t\hat\xi^{1d}([0,1])\big)|.
\end{align*}
Taking the supremum over all $t\leq T$ amounts to the supremum over all $\hat t\leq 1$, so that the statement follows from the definition of $\hat\mu^{1d}$.
\qed


\begin{thebibliography}{alpha} 

\bibitem{AKT}
    \newblock Mikl{\'o}s Ajtai, J{\'a}nos Koml{\'o}s, G{\'a}bor Tusn{\'a}dy.
    \newblock On optimal matchings.
    \newblock \emph{Combinatorica}, {\bf 4}, 4, 259--264 (1984), Springer-Verlag Berlin/Heidelberg.

\bibitem{AST}
    \newblock Luigi Ambrosio, Federico Stra, Dario Trevisan.
    \newblock A PDE approach to a 2-dimensional matching problem.
    \newblock \emph{Probability Theory and Related Fields}, {\bf 173}, 1, 433--477 (2019), Springer.

\bibitem{ATV}
    \newblock Luigi Ambrosio, Dario Trevisan, Federico Vitillaro.
    \newblock Sharp PDE estimates for random two-dimensional bipartite matching with power cost function.
    \newblock \emph{Rendiconti Lincei}, {\bf 35}, 2, 323--342 (2024).

\bibitem{BLM}
    \newblock St\'ephane Boucheron, G\'abor Lugosi, Pascal Massart.
    \newblock \emph{Concentration inequalities}. Oxford University Press, (2013).

\bibitem{CLPS}
    \newblock Sergio Caracciolo, Carlo Lucibello, Giorgio Parisi, Gabriele Sicuro.
    \newblock Scaling hypothesis for the Euclidean bipartite matching problem.
    \newblock \emph{Physical Review E}, {\bf 90}, 1, 012118 (2014), APS.
    
\bibitem{COP}
    \newblock Xiaopeng Cheng, Felix Otto and Matteo Palmieri.
    \newblock Asymptotics of a planar isoperimetric problem with a white-noise volume term.
    \newblock \emph{arXiv preprint}, 2026.

\bibitem{EG}
    \newblock Lawrence C. Evans, Ronald F. Gariepy.
    \newblock \emph{Measure theory and fine properties of functions}. Chapman \& Hall, (2015).

\bibitem{GHO}
    \newblock Michael Goldman, Martin Huesmann, Felix Otto.
    \newblock Almost sharp rates of convergence for the average cost and displacement in the optimal matching problem.
    \newblock \emph{The Abel Symposium}, 93--103 (2023), Springer.

\bibitem{KLM}
    \newblock Richard Manning Karp, Michael Luby, and Alberto Marchetti-Spaccamela.
    \newblock A probabilistic analysis of multidimensional bin packing problems
    \newblock \emph{Proceedings of the sixteenth annual ACM symposium on Theory of computing} 289--298 (1984).

\bibitem{KMT}
    \newblock János Komlós, Péter Major, and Gábor Tusnády.
    \newblock An approximation of partial sums of independent RV'-s, and the sample DF.~I.
    \newblock \emph{Zeitschrift für Wahrscheinlichkeitstheorie und verwandte Gebiete} \textbf{32(1)},  111--131 (1975).

\bibitem{LS}
	\newblock Thomas Leighton and Peter Shor.
	\newblock Tight bounds for minimax grid matching with applications to the average case analysis of algorithms.
	\newblock \emph{Combinatorica} \textbf{9}, 161–187 (1989).

\bibitem{Mag}
    \newblock Francesco Maggi.
    \newblock \emph{Sets of finite perimeter and geometric variational problems: an introduction to Geometric Measure Theory.}
    \newblock  Cambridge University Press \textbf{135} (2012).

\bibitem{Mas}
    \newblock Pascal Massart.
    \newblock Strong approximation for multivariate empirical and related processes, via KMT constructions.
    \newblock \emph{The Annals of Probability}, 266--291 (1989).

\bibitem{OP}
	\newblock Felix Otto, and Matteo Palmieri.
    \newblock Quantitative homogenization of the maximal action of curves in a Brownian potential.
	\newblock \emph{arXiv preprint} 2604.27519, 2026.

\bibitem{OPW}
	\newblock Felix Otto, Matteo Palmieri, and Christian Wagner.
	\newblock On minimizing curves in a Brownian potential.
	\newblock \emph{Probability Theory and Related Fields}, 1--62 (2026), Springer.

\bibitem{Pol}
    \newblock David Pollard
    \newblock {\emph{A User's Guide to Measure Theoretic Probability}.}
    \newblock Cambridge Series in Statistical and Probabilistic Mathematics \textbf{8}, (2011).

\bibitem{RW}
	\newblock Tobias Ried, and Christian Wagner.
	\newblock Large scale regularity and correlation length for almost length-minimizing random curves in the plane.
	\newblock \emph{arXiv preprint} {	arXiv:2412.17625}, 2026.

\bibitem{T2}
	\newblock Michel Talagrand.
	\newblock {{Matching theorems and discrepancy computations using majorizing measures}}.
	\newblock \emph{Journal of the American Mathematical Society} \textbf{7}, 455-537 (1994).

\bibitem{T}
	\newblock Michel Talagrand.
	\newblock {\emph{Upper and lower bounds for stochastic processes:
	decomposition theorems}}.
	\newblock Vol. \textbf{60}.
	\newblock Springer Nature, (2022).
    
\end{thebibliography}
\end{document}